\title
[Supersingular reduction of Kummer surfaces in char.\ $2$]
{Supersingular reduction of Kummer surfaces in residue characteristic $2$}
\author{Yuya Matsumoto}
\date{2023/02/19}
\address{\tusaddressfull}
\email{\gmail}
\email{\tusrsmail}
\thanks{This work was supported by JSPS KAKENHI Grant Number 16K17560 and 20K14296.}
\subjclass[2010]{14J28 (Primary) 14L15, 14J17 (Secondary)}
\newcommand{\EDP}{\mathrm{EDP}}
\newcommand{\circled}[1]{\textcircled{\raisebox{-0.9pt}{\small #1}}}
\begin{document}

\begin{abstract}
Given an abelian surface $A$, defined over a discrete valuation field and having good reduction, does the attached Kummer surface $\Km(A)$ also have good reduction?
In this paper we give an affirmative answer in the extreme case,
that is, when the abelian surface has supersingular reduction in characteristic $2$.
%
%
\end{abstract}

\maketitle

\section{Introduction}

Let $K$ be a Henselian discrete valuation field of characteristic $0$ 
with valuation ring $\cO$, maximal ideal $\fp$, 
and residue field $k$ assumed to be perfect.
We say that a K3 surface or an abelian surface $X$ over $K$ has \emph{good reduction}
if there exists an algebraic space $\cX$ over $\cO$, proper and smooth, whose generic fiber $\cX \otimes_{\cO} K$ is isomorphic to $X$.
We say that $X$ has \emph{potential good reduction} if there exists a finite extension $K'/K$ such that the base change $X_{K'}$ has good reduction.

The N\'eron--Ogg--Shafarevich criterion (Serre--Tate \cite{Serre--Tate}*{Theorem 1}) shows that good reduction of an abelian variety $A$
is equivalent to the unramifiedness of the Galois action on its $l$-adic cohomology group $\Het^1(A_{\overline{K}}, \bQ_l)$.
We (\cite{Matsumoto:goodreductionK3}*{Theorem 1.1}, \cite{Liedtke--Matsumoto}*{Theorem 1.3}) showed a similar criterion for K3 surfaces (using $\Het^2$), 
assuming the existence of semistable models with certain properties and the residue characteristic being large enough.


In this paper we focus on Kummer surfaces.
For an abelian surface $A$ over a field $F$, the Kummer surface $\Km(A)$ is defined to be the minimal resolution of $A / \set{\pm 1}$.
It is known that $\Km(A)$ is a K3 surface if and only if $\charac F \neq 2$ or $A$ is non-supersingular (Proposition \ref{prop:non-extreme quotient}). 
From this geometric nature of Kummer surfaces,
the following question arises naturally.
\begin{ques}
Let $A$ be an abelian surface over the discrete valuation field $K$, and let $X = \Km(A)$ be the attached Kummer surface,
which is a K3 surface since we are assuming $\charac K = 0$.
Assume $A$ has good reduction, with proper smooth model (N\'eron model) $\cA$ over $\cO$.
\begin{enumerate}
\item Does $X = \Km(A)$ have potential good reduction? \label{item:good reduction implies good reduction}
\item Are smooth proper models of $X$ related to $\cA$ in a geometric way? \label{item:relation between models}
\item Is the special fiber $\cX_k$ isomorphic to $\Km(\cA_k)$? \label{item:special fiber being kummer}
\end{enumerate}
\end{ques}

Let us say that we are in the \emph{extreme case} 
if $\cA_k$ is supersingular of characteristic $2$, 
and in the \emph{non-extreme case}
if otherwise (i.e.\ if $\charac k \neq 2$, or $\charac k = 2$ and $\cA_k$ is non-supersingular).
If we are in the non-extreme cases,
then it is known that the all questions have essentially affirmative answers (Proposition \ref{prop:good reduction of non-extreme Kummer surfaces}):
Roughly, one can blow-up the ``singularity'' of the quotient $\cA / \set{\pm 1}$ of the N\'eron model $\cA$ of $A$ to obtain a smooth proper model of $X = \Km(A)$.

If we are in the extreme case,
then the above construction does not give a smooth proper model of $X = \Km(A)$,
mainly because the singularity of $\cA_k / \set{\pm 1}$ is non-rational.
In this case $\Km(\cA_k)$ is a rational surface, and (\ref{item:special fiber being kummer}) cannot hold.
What we prove in this paper is that (\ref{item:good reduction implies good reduction}) and (\ref{item:relation between models})
 holds even in this extreme case.
\begin{thm} \label{thm:good reduction:bis}
Let $K$ and $k$ be as above.
Suppose that $\charac K = 0$ and $\charac k = 2$, 
that an abelian surface $A$ over $K$ has good reduction with N\'eron model $\cA$,
and that the special fiber $\cA_k$ is supersingular.
Then $X = \Km(A)$ has potential good reduction.
\end{thm}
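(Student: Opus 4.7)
The strategy is to combine a cohomological argument, which shows that the Galois representation on $\Het^2(\Km(A)_{\bar K}, \bQ_l)$ is potentially unramified, with an explicit construction of an auxiliary proper model near which one can apply a good-reduction criterion for K3 surfaces. Throughout I allow myself to replace $K$ by any finite extension $K'$, so that one may freely assume: the abelian surface $A$ carries a principal polarization, all geometric $2$-torsion of $A$ is $K$-rational, the N\'eron model $\cA/\cO$ comes equipped with closures of the $16$ two-torsion points as sections, and any mild technical hypothesis needed for the criterion invoked in the last step is met.

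The cohomological side is essentially formal. In characteristic $0$, the involution $[-1]$ acts trivially on $\Het^2(A_{\bar K}, \bQ_l)$ (since it acts as $+1$ on $\wedge^2 H^1$), and the Kummer construction yields a Galois-equivariant decomposition
\[
\Het^2(\Km(A)_{\bar K}, \bQ_l) \;\cong\; \Het^2(A_{\bar K}, \bQ_l) \;\oplus\; \bQ_l(-1)^{\oplus 16},
\]
in which the second summand is generated by the classes of the $16$ exceptional $(-2)$-curves coming from the resolved $A_1$-singularities. Since $A$ has good reduction, the N\'eron--Ogg--Shafarevich criterion (Serre--Tate) gives that the first summand is Galois-unramified; after making the $2$-torsion rational the second summand is also Galois-unramified. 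Hence the Galois action on $\Het^2(X_{\bar K}, \bQ_l)$ is potentially unramified.

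The geometric side is where the real work lies. In the non-extreme case (see Proposition~\ref{prop:good reduction of non-extreme Kummer surfaces}), one forms $\cA/\set{\pm 1}$, whose relative singularities are the $16$ disjoint sections of $A_1$-singularities, and resolves; the result is a smooth proper model. In the extreme case this fails because $\cA_k[2]$ is an infinitesimal group scheme (of type $\alpha_2 \oplus \alpha_2$, roughly) and the $16$ sections collide to a single non-reduced point in the special fiber, producing a non-rational singularity in $\cA_k/\set{\pm 1}$. The plan is to replace the naive construction by first performing a carefully chosen sequence of blow-ups of $\cA$ centered on (the closure of) $\cA[2]$, after which the involution $[-1]$ lifts, the quotient can be formed, and a minimal resolution of the relative singularities yields an algebraic space $\cX/\cO'$, proper over $\cO'$, whose generic fiber is $\Km(A)$ and whose special fiber is a (non-Kummer) K3 surface. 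Combined with the cohomological unramifiedness established above, $\cX$ will provide the required smooth model, possibly after one further application of a K3 good-reduction criterion to pass from a good candidate model to an actual smooth model.

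The hardest step will be the detailed local analysis underlying this blow-up/quotient/resolve procedure. Because $\cA_k[2]$ is non-reduced in the extreme case, its ideal sheaf behaves very differently from the \'etale case, and one must choose formal coordinates on $\cA$ adapted to the supersingular $2$-torsion (using, e.g., the Dieudonn\'e-theoretic or formal-group description of $\cA_k[2]$) and check by explicit computation that the successive blow-ups convert the non-rational singularity of the quotient into at worst rational double points, while keeping the total space smooth over $\cO'$. Identifying the resulting special fiber as a K3 surface, and thereby ruling out $\Km(\cA_k)$ as the reduction, is the conceptual core of the argument and is where the supersingularity of $\cA_k$ plays its essential role.
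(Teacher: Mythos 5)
Your cohomological half is fine and coincides with the paper's: N\'eron--Ogg--Shafarevich for $A$ plus the decomposition of $\Het^2(\Km(A)_{\overline K},\bQ_l)$ gives potential unramifiedness. The gap is in the geometric half, which you leave as a plan ("carefully chosen sequence of blow-ups\dots check by explicit computation") — but that plan \emph{is} the theorem; without it nothing has been proved. Moreover, the plan as stated would not work. No sequence of blow-ups of $\cA$ followed by forming the quotient and taking a "minimal resolution of the relative singularities" can produce a model that is smooth over $\cO'$ with K3 special fiber: any proper birational modification of $\cA/\set{\pm 1}$ still contains the strict transform of $\cA_k/\set{\pm 1}$ — a rational surface — as a component of its special fiber, and resolving singularities only adds components, it never removes this one. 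What the paper actually does is different in kind: starting from Katsura's identification of the singularity of $\cA_k/\set{\pm 1}$ as $\EDP_{4,4}^{(1)}$ or $\EDP_{2,8}^{(1)}$, it performs a normalized \emph{weighted} blow-up of the quotient (weights $(2,2,5,1)$, resp.\ $(6,2,9,1)$, with an auxiliary element $\epsilon\in\idealp$ chosen according to valuations of the coefficients of the action, after a further extension of $\cO$), obtaining a model whose special fiber is the union of that rational component $Z$ and an exceptional surface $E$ with $H^1(\cO_E)=0$, $K_E=0$; after further blow-ups this model is strictly semistable in the broad sense away from the singularities of $E$. The smooth model is then reached only by contracting $Z$ via the mixed-characteristic MMP of Takamatsu--Yoshikawa, packaged in Theorem \ref{thm:sufficient condition for good reduction}. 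So the "one further application of a K3 good-reduction criterion" that you mention as an optional afterthought is in fact the essential mechanism, while your primary route (lift the involution, quotient, resolve, get a smooth K3 fiber) cannot succeed.

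A second concrete omission: even granting the weighted blow-up, one must show the exceptional surface $E$ has at worst rational double points (so that it is an RDP K3 and the model can be made semistable in the broad sense after resolving the $16\,A_1$ sections on the generic fiber). In the paper this is not automatic from the local computation — a priori $E$ could acquire $1\,\EDP_{2,8}^{(0)}$ or other configurations — and is settled by exploiting that translation by $A[2]$ acts transitively, forcing all singularities of $\Sing(E)\setminus Z$ to be of the same type, which (together with the Tyurina-number count $\tau=32$) leaves only $16A_1$, $4D_4^0$, $2D_8^0$, $2E_8^0$, $1D_{16}^0$, or a single $\EDP_{2,8}^{(0)}$, the last case being handled by running the construction once more. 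Your proposal contains no substitute for this step, so "convert the non-rational singularity into at worst rational double points" is an unsupported hope rather than an argument.
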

We overview the method of our proof and its consequences.
We give (in Section \ref{subsec:explicit computation}) 
a proper birational morphism $\map{\phi}{\tilde{\cX}}{\cA/\set{\pm 1}}$,
defined as the composite of explicit normalized blow-ups at singular points of the special fiber,
from a proper model $\tilde{\cX}$ of $X$ that is strictly semistable in the broad sense (Definition \ref{def:strictly semistable in the broad sense}).
Then, according to Theorem \ref{thm:sufficient condition for good reduction}, we can run a relative MMP 
and obtain a birational map $\map[\rationalto]{f}{\tilde{\cX}}{\cX}$ to a smooth proper model $\cX$ of $X$,
which achieves good reduction.

The special fiber $\cX_k$ of $\cX$ is not isomorphic, nor even birational, to the special fiber $\cA_k / \set{\pm 1}$ of $\cA / \set{\pm 1}$, since the latter is a rational surface.
Hence the birational map $f \circ \phi^{-1}$ (resp.\ its inverse $\phi \circ f^{-1}$) contracts $\cA_k / \set{\pm 1}$ (resp.\ $\cX_k$).
We may understand that the ``K3-ness'' of the special fiber $\cA / \set{\pm 1}$ lies in the local ring at the singular point of $\cA_k / \set{\pm 1}$,
not on the smooth complement of the point in $\cA_k / \set{\pm 1}$.

Moreover, our construction shows that the special fiber $\cX_k$ is
birational to the quotient of a certain rational surface $A'$ by a certain group scheme of length $2$ (see Remark \ref{rem:inseparable kummer}).
We can modify $A'$ and make it an ``abelian-like'' surface,
by which we mean a surface sharing numerical properties with abelian surfaces.
Thus $\cX_k$ can be viewed as an example of an inseparable analogue of Kummer surfaces.
We will study this subject in another paper.

The proof of Theorem \ref{thm:sufficient condition for good reduction} is essentially given in \cite{Matsumoto:goodreductionK3}*{Section 3}, 
except that we use Takamatsu--Yoshikawa's MMP \cite{Takamatsu--Yoshikawa:mixed3fold}*{Theorem 1.1} (which is applicable in the case of residue characteristic $p = 2$)
in place of Kawamata's (which requires $p \geq 5$).

\section{Preliminaries}

\subsection{Some elliptic double points}

In this subsection, we work over an algebraically closed field $k$ of characteristic $2$.

\begin{defn} \label{def:equations of EDPs}
For $(m,n) = (4, 4), (2, 8), (2, 6)$ and $r = 0, 1$, we say that
$k[[x,y,z]] / (z^2 + r z x^{m/2} y^{n/2} + x^{m+1} + y^{n+1})$
is $\EDP_{m, n}^{(r)}$.
\end{defn}

\begin{rem} \label{rem:EDPs}
The following properties of $\EDP_{m, n}^{(r)}$ hold.
\begin{itemize}
\item
They are all elliptic double points. 
\item
The exceptional divisor of the minimal resolution of $\EDP_{m,n}^{(r)}$
is independent of $r$ and consists of: 
\begin{itemize}
\item one cuspidal rational curve of arithmetic genus $1$ for $\EDP_{2,6}^{(r)}$;
\item smooth rational curves, with the dual graph shown in Figure \ref{fig:dual graph of EDP},
for $\EDP_{2,8}^{(r)}$ and $\EDP_{4,4}^{(r)}$.
\end{itemize}
In the figures, a vertex with $-m$ inside denotes a smooth rational curve of self-intersection $-m$, 
and the absence of a number denotes self-intersection $-2$.
The symbols $\circled{19}_{0}$ and $\circled{4}_{0,1}^{1}$ are due to Wagreich \cite{Wagreich:elliptic singularities}*{Theorem 3.8}.
\item 
$\EDP_{m, n}^{(1)}$ is a $\bZ/2\bZ$-quotient singularity by \cite{Artin:wild2}*{Theorem}, and 
$\EDP_{m, n}^{(0)}$ is an $\alpha_2$-quotient singularity by \cite{Matsumoto:k3alphap}*{Theorem 3.8}.
\end{itemize}
\end{rem}

\begin{rem} \label{rem:another form of EDP2,8}
We have alternative equations for $\EDP_{2, 8}^{(r)}$ and $\EDP_{4, 4}^{(r)}$.
\begin{itemize}
\item 
$\EDP_{2, 8}^{(r)}$ is isomorphic to 
$k[[x,y,z]] / (z^2 + r z x^2 (x + y^2) + x (x + y^2)^2 + x^4 y)$.
Indeed, starting from this equation and letting $x' = x + y^2$, $z' = z + x' y$, $x' = (x'' + r y^5) (1 + r z)$,
we obtain $z'^2 u_1 + r z' x'' y^4 u_2 + x''^3 u_3 + y^9 u_4 = 0$ for some units $u_i$, 
and by replacing $x'',y,z'$ with suitable unit multiples we obtain the equation in Definition \ref{def:equations of EDPs}.
\item 
$\EDP_{4, 4}^{(r)}$ is also isomorphic to 
$k[[x,y,z]] / (z^2 + r z x^2 y^2 + x^4 y + x y^4)$.
Indeed, starting from the equation of Definition \ref{def:equations of EDPs},
letting $\zeta$ be a primitive $5$-th root of $1$ and 
$v = x + \zeta y$ and $w = \zeta x + y$,
we have $x^5 + y^5 = (\zeta^2 + \zeta^3) (v^4 w + v w^4)$, and so on.
\end{itemize}
\end{rem}

The EDPs with $(m,n) = (4,4), (2,8)$ are relevant to us because of the following.
(The remaining one with $(m,n) = (2,6)$ is used to describe possible degenerations.)

\begin{prop}[Katsura \cite{Katsura:Kummer 2}] \label{prop:supersingular quotient}
Suppose $A$ is a supersingular abelian surface in characteristic $2$.
If $A$ is superspecial (resp.\ non-superspecial), then the singularity of $A/\set{\pm 1}$ 
consists of one point of type $\EDP_{4, 4}^{(1)}$ (resp.\ $\EDP_{2, 8}^{(1)}$).
\end{prop}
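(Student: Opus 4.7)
The plan is to localize the problem at the identity. Since $A$ is supersingular in characteristic $2$, its $p$-rank is zero, so the $2$-torsion subgroup scheme $A[2]$ -- which coincides with the fixed-point scheme of $[-1]$ on $A$, being the kernel of multiplication by $-2$ -- is connected of length $16$ and supported set-theoretically at the origin. Consequently $A/\set{\pm 1}$ has a single singular point, namely the image of the origin, and the claim reduces to identifying the completed local ring there as one of the rings in Definition \ref{def:equations of EDPs}. Moreover, $\set{\pm 1}$ acts as the \'etale constant group $\underline{\bZ/2\bZ}$ (its two elements are the distinct automorphisms $[1]$ and $[-1]$ of $A$), so the singularity is a $\bZ/2\bZ$-quotient singularity; once its minimal resolution is identified with one of the dual graphs of Figure \ref{fig:dual graph of EDP}, Remark \ref{rem:EDPs} forces $r = 1$. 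Thus only the pair $(m,n)$ remains to be pinned down.

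I would then perform the local computation via the formal completion $\widehat{A} \isom \Spf k[[u,v]]$, letting $\sigma$ denote the involution induced by $[-1]$. Its differential at the origin is multiplication by $-1 = 1$, so $\sigma$ acts trivially on the cotangent space and the entire nontriviality lies in higher-order terms of the formal group law. In the superspecial case $A \isom E \times E$ with $E$ the supersingular elliptic curve in characteristic $2$, the completion $\widehat{A} = \widehat{E} \times \widehat{E}$ splits and $\sigma$ acts diagonally by the inversion of the height-$2$ formal group $\widehat{E}$; after a standard normalization (e.g.\ $[2](t) = t^4$ on $\widehat{E}$), one writes $\sigma(u), \sigma(v)$ explicitly, computes the invariant ring $k[[u,v]]^{\sigma}$, and should obtain the equation $z^2 + z x^2 y^2 + x^4 y + x y^4 = 0$, i.e.\ $\EDP_{4,4}^{(1)}$ in the alternative form of Remark \ref{rem:another form of EDP2,8}. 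In the non-superspecial case, $\widehat{A}$ is a $2$-dimensional formal group with $a$-number $1$, which does not split as a product of $1$-dimensional formal groups; an analogous but asymmetric computation yields the equation for $\EDP_{2,8}^{(1)}$, again conveniently in the alternative form of Remark \ref{rem:another form of EDP2,8}.

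The main obstacle is the explicit computation of the invariants and their normalization. One must choose formal parameters in which $\sigma$ has a workable closed form, produce enough invariants (e.g.\ traces $u + \sigma(u)$ and norms $u \sigma(u)$), eliminate to obtain a hypersurface in three generators, and bring the defining equation into one of the standard forms of Definition \ref{def:equations of EDPs} by a sequence of variable changes. The delicate point is that the superspecial/non-superspecial distinction -- invisible at the isogeny level -- must emerge from the $a$-number of the formal group and ultimately distinguish the two dual graphs $\circled{19}_{0}$ and $\circled{4}_{0,1}^{1}$ of Figure \ref{fig:dual graph of EDP}. Since Katsura \cite{Katsura:Kummer 2} has carried out this computation, the cleanest presentation would cite his explicit normal forms and reconcile them with the $\EDP_{m,n}^{(r)}$ notation adopted here.
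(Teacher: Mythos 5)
Your proposal is correct and takes essentially the same route as the paper: the paper's proof is simply the citation to Katsura (Proposition 8 for the superspecial case, Lemma 12 for the non-superspecial case), with Lemma 12's equation reconciled with the $\EDP_{2,8}^{(1)}$ normal form via the coordinate changes of Remark \ref{rem:another form of EDP2,8} --- exactly the citation-plus-reconciliation you settle on at the end. Your preliminary reductions (a single singular point since $A[2]$ is connected, the wild $\bZ/2\bZ$-action, the formal-group invariant computation) are in effect a sketch of Katsura's own argument rather than a different method.
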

Here, a supersingular abelian surface is called \emph{superspecial}
if it is isomorphic (not only isogenous) to the product of two supersingular elliptic curves.
\begin{proof}
In the superspecial case, this is
\cite{Katsura:Kummer 2}*{Proposition 8}.
In the non-superspecial case, 
\cite{Katsura:Kummer 2}*{Lemma 12}
gives an equation similar to the one given in Remark \ref{rem:another form of EDP2,8}.
\end{proof}

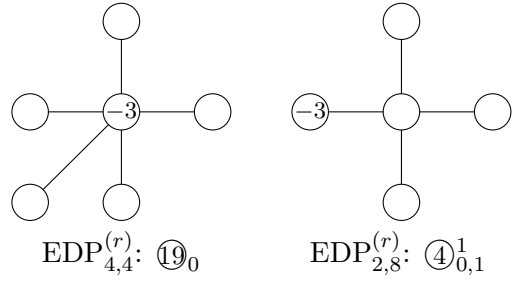
\begin{figure}
\begin{tabular}{ccc}
\begin{tikzpicture}[scale = 0.6]
	\draw (2,2) circle [radius=0.4];
	\draw (0,2) circle [radius=0.4];
	\draw (0,0) circle [radius=0.4];
	\draw (2,0) circle [radius=0.4];
	\draw (2,4) circle [radius=0.4];
	\draw (4,2) circle [radius=0.4];
	\draw (0.4,2)--(1.6,2) (2.4,2)--(3.6,2);
	\draw (2,0.4)--(2,1.6) (2,2.4)--(2,3.6);
	\draw (0.28,0.28)--(1.72,1.72);
	\draw (2,2) node {\footnotesize $-3$};
\end{tikzpicture}
& &
\begin{tikzpicture}[scale = 0.6]
	\draw (2,2) circle [radius=0.4];
	\draw (0,2) circle [radius=0.4];
	\draw (2,0) circle [radius=0.4];
	\draw (2,4) circle [radius=0.4];
	\draw (4,2) circle [radius=0.4];
	\draw (0.4,2)--(1.6,2) (2.4,2)--(3.6,2);
	\draw (2,0.4)--(2,1.6) (2,2.4)--(2,3.6);
	\draw (0,2) node {\footnotesize $-3$};
\end{tikzpicture}
\\
$\EDP_{4,4}^{(r)}$: $\circled{19}_{0}$  & \qquad &
$\EDP_{2,8}^{(r)}$: $\circled{4}_{0,1}^{1}$
\end{tabular}
\caption{Dual graphs of minimal resolutions of EDPs}
\label{fig:dual graph of EDP}
\end{figure}

\subsection{Quotients by group schemes of length $2$ in mixed characteristic} \label{subsec:quotient 2}

We recall Tate--Oort's classification \cite{Tate--Oort:groupschemes}*{Theorem 2} of finite group schemes $G$ of length $2$.
For our purposes it suffices to consider $G$ over a base ring $\cO$ that is a PID (possibly a field).
Then, such group schemes are of the form $G_{\alpha, \beta}$, 
defined as follows,
with parameters $\alpha, \beta \in \cO$ satisfying $\alpha \beta = 2$
(which are denoted by $a, b$ in \cite{Tate--Oort:groupschemes}).

The underlying scheme is $G_{\alpha, \beta} = \Spec R$, $R = \cO[X] / (X^2 - \alpha X)$.
The comultiplication map is given by $\mapandmapsto{R}{R \otimes_{\cO} R}{X}{X \otimes 1 + 1 \otimes X - \beta X \otimes X}$.
Actions of $G_{\alpha, \beta}$ on affine schemes $\Spec S$ correspond,
via $\mapandmapsto{S}{R \otimes_{\cO} S}{s}{1 \otimes s + X \otimes \delta(s)}$,
to $\cO$-linear maps $\map{\delta}{S}{S}$ satisfying the conditions
$\delta(s t) = \delta(s) t + s \delta(t) + \alpha \delta(s) \delta(t)$ and $\delta(\delta(s)) = - \beta \delta(s)$.
In particular, $\id + \alpha \delta$ is an automorphism of the $\cO$-algebra $S$ of order dividing $2$.

For a unit $e \in \cO^*$, the group schemes $G_{\alpha, \beta}$ and $G_{e \alpha, e^{-1} \beta}$ are isomorphic under the obvious maps.

\begin{example} \label{ex:group schemes of length 2}
Three important cases of $G_{\alpha, \beta}$ are the following.
\begin{itemize}
\item If $(\alpha, \beta) = (1, 2)$, then $G_{\alpha, \beta}$ is isomorphic to the constant group scheme $\bZ/2\bZ$.
The map $\delta$ is equal to $g - \id$, where $g$ is (the action on $S$ of) the nontrivial element of $\setZ/2\setZ$.
\item If $(\alpha, \beta) = (2, 1)$, then $G_{\alpha, \beta}$ is isomorphic to $\mu_2$.
Actions on $\Spec S$ also correspond to $\bZ/2\bZ$-gradings $S = \bigoplus_{i \in \bZ/2\bZ} S_i$ on the $\cO$-algebra $S$, and then $\delta$ is equal to $(-1)$ times the projection to $S_1$.
If moreover $2 = 0$ in $\cO$, then the maps $\delta$ are precisely the derivations of multiplicative type.
\item If $(\alpha, \beta) = (0, 0)$ (which implies $2 = 0$ in $\cO$), then $G_{\alpha, \beta}$ is isomorphic to $\alpha_2$.
The maps $\delta$ are precisely the derivations of additive type.
\end{itemize}
If $\cO$ is a field of characteristic $2$, then any $G$ is isomorphic to exactly one of above.

If $2$ is invertible in $\cO$, then any $G$ is isomorphic to both $\bZ/2\bZ$ and $\mu_2$.
\end{example}

The \emph{fixed locus} $\Fix(G)$ of an action on $\Spec S$ is the closed subscheme, or the closed subset,
corresponding to the ideal of $S$ generated by $\Image(\delta)$.
In other words, its complement is the largest open subscheme where $G$ acts freely.

\begin{prop} \label{prop:quotient singularity}
Let $\cO$ be a local PID (possibly a field) with maximal ideal $\idealp$ and residue field $k$.
Let $G = G_{\alpha, \beta}$ be a group scheme of length $2$ over $\cO$, acting on $\cO[[u, v]]$,
and assume $\Fix(G) \supset (u = v = 0)$ (as subsets of $\Spec \cO[[u, v]]$)
and $\Fix(G_k \actson k[[u,v]]) = (u = v = 0)$ (as subsets of $\Spec k[[u, v]]$).
Then, the elements
\begin{alignat*}{2}
x &:= u (u + \alpha \delta(u)), &
a &:= \beta u + \delta(u), \\
y &:= v (v + \alpha \delta(v)), &
b &:= \beta v + \delta(v), \\
z &:= \beta u v + \delta(u) v + u \delta(v), 
\end{alignat*}
are $G$-invariant, and 
the invariant subalgebra $\cO[[u, v]]^G$ is equal to 
$\cO[[x, y, z]] / (F)$, 
$F = z^2 - \alpha a b z + a^2 y + b^2 x - \beta^2 x y$.

Suppose $\charac (\Frac \cO) = 0$ and $\charac k = 2$.
Let $\tau$ be the Tyurina number of $k[[x, y, z]] / (F)$.
If $G \otimes_{\cO} k = \alpha_2$, then 
$\tau = 2 \deg \Fix(G \actson k[[u,v]]) = 2 \deg \Fix(G \actson (\Frac \cO) \otimes_{\cO} \cO[[u, v]])$.
If $G \otimes_{\cO} k = \mu_2$, then 
$\tau = 2$ and $\deg \Fix(G \actson k[[u,v]]) = \deg \Fix(G \actson (\Frac \cO) \otimes_{\cO} \cO[[u, v]]) = 1$.
\end{prop}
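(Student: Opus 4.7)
The proposition has four components: the $G$-invariance of $x,y,z,a,b$, the vanishing of $F(x,y,z)$ in $\cO[[u,v]]$, the identification of the invariant subalgebra, and the Tyurina/fixed-locus computations in the two mixed-characteristic cases. My plan is to handle the first two by direct manipulation of the twisted Leibniz formula, the third by a generic étale descent argument, and the last by separately reducing $F$ modulo $\idealp$.

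\textbf{Invariance and the identity $F=0$.} First I would exploit that $\sigma := \id + \alpha\delta$ is a ring automorphism of order dividing $2$, with $\alpha$ a nonzerodivisor in the characteristic-$0$ PID $\cO$: this makes $x = u\sigma(u)$ and $y = v\sigma(v)$ visibly $\sigma$-invariant (hence $\delta$-killed), while invariance of $a,b$ is immediate from $\delta(\delta s) = -\beta\delta s$, and $\delta z = 0$ is a short calculation using $\alpha\beta = 2$. For the relation, I would begin with the rewritings
\[
x = \alpha au - u^2, \quad y = \alpha bv - v^2, \quad av + bu = z + \beta uv,
\]
from which $a^2 y + b^2 x = \alpha ab(z+\beta uv) - (z+\beta uv)^2 + 2abuv$; expanding $\beta^2 xy$ using $\alpha\beta = 2$ and collecting terms then shows $a^2 y + b^2 x - \beta^2 xy = \alpha abz - z^2$, i.e.\ $F = 0$ in $\cO[[u,v]]$.

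\textbf{Structure of the invariant ring.} For the identification $\cO[[u,v]]^G = \cO[[x,y,z]]/(F)$, the map $\cO[[x,y,z]]/(F) \to \cO[[u,v]]^G$ is well-defined by the previous step. The plan is to show it is an isomorphism by demonstrating that $\cO[[u,v]]$ is generated by $\{1,u\}$ over the image, with the degree-$2$ relation coming from the monic quadratic $u^2 - \alpha au + x = 0$; this is transparent in the generic fiber where $\alpha$ or $\beta$ becomes invertible. To extend to the integral level, I would use that $\cO[[u,v]]^G$ is normal and that both rings agree in codimension one, so Hartogs' phenomenon forces equality.

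\textbf{Tyurina number and fixed locus.} Reducing $F$ modulo $\idealp$ separates the two subcases. In the $\alpha_2$ case ($\alpha,\beta \in \idealp$), $F_k = z^2 + a^2 y + b^2 x$ in characteristic $2$ with $a \equiv \delta u$, $b \equiv \delta v$; since $\partial_z F_k = 2z = 0$, the Jacobian ideal is $(a^2, b^2)$, and
\[
\tau = \dim_k k[[x,y,z]]/(z^2, a^2, b^2) = 2 \dim_k k[[x,y]]/(a^2,b^2).
\]
Via the Frobenius isomorphism $k[[u,v]] \isomto k[[u^2,v^2]] = k[[x,y]]$, $f \mapsto f^2$, this dimension equals $\dim_k k[[u,v]]/(\delta u, \delta v) = \deg \Fix(G \actson k[[u,v]])$, yielding $\tau = 2\deg \Fix$. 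The equality with the generic-fiber fixed-locus degree should follow from $\cO$-flatness of $\cO[[u,v]]/(\delta u,\delta v)$, which is a finite $\cO$-scheme lifting the $G$-fixed subscheme. In the $\mu_2$ case ($\alpha \in \idealp$, $\beta \in \cO^\times$), the hypothesis $\Fix(G_k) = (u=v=0)$ combined with the $\bZ/2\bZ$-grading description of $\mu_2$-actions in characteristic $2$ forces $\delta u \equiv u$, $\delta v \equiv v$ modulo $\idealp$, so $a \equiv b \equiv 0$ and $F_k = z^2 - xy$: a node with $\tau = 2$ and fixed locus of degree $1$ in both fibers.

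\textbf{Main obstacle.} I expect the trickiest point to be the identification of invariant rings, specifically interpreting the coefficients $a^2, b^2, \alpha ab$ of $F$ as genuine elements of $\cO[[x,y,z]]$ rather than of the larger ring $\cO[[u,v]]$; this will require careful use of the structure of $\delta$ permitted by the fixed-locus hypothesis. A close second is justifying the equality of the two fixed-locus degrees in the $\alpha_2$ case, which relies on flatness of the fixed subscheme over $\Spec \cO$.
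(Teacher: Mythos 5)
The main gap is in the central step, the identification $\cO[[u,v]]^G=\cO[[x,y,z]]/(F)$, where your plan rests on a false claim: $\cO[[u,v]]$ is \emph{not} generated by $\set{1,u}$ as a module over the image of $\cO[[x,y,z]]$. Already in the linearized example over a field of characteristic $2$ with $G=\mu_2$, $\delta(u)=u$, $\delta(v)=v$ (so $x=u^2$, $y=v^2$, $z=uv$), the image is $R=k[[u^2,v^2,uv]]$ and $v\notin R+Ru$: writing $v=r_0+r_1u$ and comparing the $\bZ/2\bZ$-graded pieces forces $r_0=0$ and $u\mid v$, absurd. The same failure occurs in characteristic $0$ at a fixed point of the involution ($x=-u^2$, $y=-v^2$, $z=-2uv$ after linearization), so the step is not ``transparent in the generic fiber'' either; one needs at least the three generators $1,u,v$, and the module is not free. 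More importantly, the actual content of the generation statement in residue characteristic $2$ is a structure theorem for wild $\bZ/2\bZ$-, $\mu_2$- and $\alpha_2$-actions: the paper reduces modulo the maximal ideal and then invokes Artin's normal form for wild $\bZ/2\bZ$-actions (Lemma \ref{lem:wild Z/2Z-action}) and the theorem that $k[[u,v]]^G$ is free with basis $1,w$ over $k[[x,y]]=k[[u^2,v^2]]$ for $G=\mu_2,\alpha_2$ \cite{Matsumoto:k3alphap}, together with the isolated-fixed-locus hypothesis to show the coefficient of $w$ in $z$ is a unit. Your proposal neither proves nor cites any such input, so the heart of the proof is missing.

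The fallback via normality/S2 and ``agreement in codimension one'' does not repair this, because the codimension-one agreement is exactly where the characteristic-$2$ difficulty sits: the height-one primes of $\cO[[u,v]]^G$ include the generic points of the special fiber, where the action is a torsor under wild $\bZ/2\bZ$, $\mu_2$ or $\alpha_2$, and it is not automatic (nor addressed) that the specific elements $x,y,z$ generate the invariant ring there; birationality of $\cO[[x,y,z]]/\ker$ with $\cO[[u,v]]^G$ is likewise left open, and your own flagged obstacle --- re-expressing $a^2,b^2,\alpha ab$ in terms of $x,y,z$ --- is part of the same unproved step. By contrast, the pieces you do carry out essentially agree with the paper: invariance and the identity $F=0$ (the paper uses $z'=z+\alpha\delta(u)\delta(v)$ with $z+z'=\alpha ab$, $zz'=a^2y+b^2x-\beta^2xy$, but your direct expansion is equivalent), and the Tyurina computations (Tyurina ideal $(z^2,a^2,b^2)$ in the $\alpha_2$ case; linearization in the $\mu_2$ case). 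Two minor caveats there: the invariance argument via $\sigma=\id+\alpha\delta$ needs $\alpha$ to be a nonzerodivisor, while the first part of the statement also covers characteristic-$2$ fields where $\alpha$ may vanish (direct use of the twisted Leibniz rule fixes this); and in the $\mu_2$ case $\delta(u)\equiv u$, $\delta(v)\equiv v$ holds only after a coordinate change, and $a\equiv b\equiv 0$ only after normalizing $\beta\equiv 1$, although the conclusion $F_k=z^2-\beta^2xy$, $\tau=2$ is unaffected. Your appeal to $\cO$-flatness of $\cO[[u,v]]/(\delta u,\delta v)$ for the equality of generic- and special-fiber fixed degrees in the $\alpha_2$ case is a reasonable idea (the paper is silent on this point), but it too would need justification rather than assertion.
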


If $G_{\alpha, \beta} = G_{1, 2} = \bZ/2\bZ$,
then the formulas simplify to 
$x = u g(u)$, $y = v g(v)$, $z = u g(v) + g(u) v$, 
$a = u + g(u)$, $b = v + g(v)$,
where $g$ is the non-trivial element of the group.

\begin{proof}
It is straightforward to check that $x,y,z,a,b$ are invariant.
Letting $z' := z + \alpha \delta(u) \delta(v)$, we observe $z + z' = \alpha a b$ and $z z' = a^2 y + b^2 x - \beta^2 x y$,
which implies $F(x,y,z) = 0$.
Thus it remains to show that $x, y, z$ generates $\cO[[u, v]]^G$.
Since this can be checked modulo the maximal ideals, we may assume that $\cO = k$ is a field.
Then $G$ is isomorphic to one in Example \ref{ex:group schemes of length 2}
(i.e.\ $\bZ/2\bZ$, $\mu_2$, or $\alpha_2$).

Note that in any case the ideal generated by $\Image(\delta)$ is generated by two elements $\delta(u), \delta(v)$.

Suppose $G = \mu_2$ or $G = \alpha_2$.
Then $x = u^2, y = v^2$. 
It follows from \cite{Matsumoto:k3alphap}*{Theorem 3.8} that $k[[u, v]]^G$ has a $k[[x, y]]$-basis of the form $1,w$.
Write $w = c_{00} + c_{10} u + c_{01} v + c_{11} u v$ with $c_{ij} \in k[[u^2, v^2]] = k[[x,y]]$.
Write $z = d + e w$ with $d, e \in k[[x,y]]$.
It suffices to show $e \in k[[x,y]]^*$.
We have $z^2 = d^2 + e^2 w^2 = d^2 + e^2 c_{00}^2 + e^2 c_{10}^2 x + e^2 c_{01}^2 y + e^2 c_{11}^2 x y$.
We also have $z^2 = b^2 x + a^2 y - \beta^2 x y$.
It suffices to show that ${a^2,b^2,\beta^2}$ have no common divisor. 
By the assumption on the fixed locus, the ideal $(\delta(u), \delta(v)) = (a, b)$ is supported on the closed point,
and so is the ideal $(a^2, b^2)$.

Now suppose $G = \bZ/2\bZ$.
In this case $x = N(u)$, $y = N(v)$, and $z = \Tr(u g(v))$,
and it follows from Lemma \ref{lem:wild Z/2Z-action} that 
$x, y, z$ generates $\cO[[u, v]]^G$.

Latter assertion.
Suppose $G_k = \alpha_2$. Then we have $\deg \Fix(G \actson k[[u,v]]) = \deg k[[u,v]]/(a,b)$,
and $\tau = \dim_k k[[x,y,z]] / (a^2, b^2, z^2) = 2 \dim_k k[[x,y]] / (a^2, b^2) = 2 \dim_k k[[u,v]] / (a, b)$
since $x = u^2$ and $y = v^2$ in this case.

Suppose $G_k = \mu_2$.
In this case we can linearize the action and then $\delta(u) = u$ and $\delta(v) = v$, hence $\deg \Fix(G \actson k[[u,v]]) = 1$.
\end{proof}

\begin{lem} \label{lem:wild Z/2Z-action} 
Suppose $k$ is a field of characteristic $2$,
and $G = \bZ/2\bZ = \set{1, g}$ acts on $k[[u, v]]$, freely outside the closed point.
Let $x = N(u) = u g(u)$, $y = N(v) = v g(v)$, and $z = \Tr(u g(v)) = u g(v) + g(u) v$.
Then $k[[u, v]]^G = k[[x,y,z]] / (z^2 + a b z + a^2 y + b^2 x)$,
where $a = \Tr(u) = u + g(u)$ and $b = \Tr(v) = v + g(v)$.
\end{lem}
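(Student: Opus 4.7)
The plan is to verify the displayed polynomial relation $F := z^2 + abz + a^2 y + b^2 x = 0$ directly, and then to argue that the resulting map $k[[X, Y, Z]]/(F) \to R^G$ (with $R := k[[u,v]]$) is an isomorphism, with $a, b$ interpreted as specific power series in $x, y, z$.

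\emph{The polynomial identity.} In characteristic $2$ we may write $g(u) = u + a$ and $g(v) = v + b$. A direct expansion then gives $z = u g(v) + g(u) v = u(v + b) + (u+a) v = bu + av$, since the cross-terms $uv$ cancel in characteristic $2$, while $x = u g(u) = u^2 + au$ and $y = v^2 + bv$ yield $u^2 = x + au$ and $v^2 = y + bv$. Squaring $z = bu + av$ in characteristic $2$ annihilates the middle term and gives $z^2 = b^2 u^2 + a^2 v^2 = b^2(x + au) + a^2(y + bv) = b^2 x + a^2 y + ab(bu + av) = b^2 x + a^2 y + abz$, which rearranges to $F(x, y, z) = 0$ in $R$, hence in $R^G$.

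\emph{Freeness and the ambient structure.} Since $R = k[[u,v]]$ is a $2$-dimensional regular (hence Cohen--Macaulay) local ring and $G$ acts freely away from the closed point, the inclusion $R^G \hookrightarrow R$ is module-finite and generically étale, so that $R$ is a free $R^G$-module of rank $2$. Consequently $R^G$ is itself a complete Noetherian local domain of Krull dimension $2$. Moreover, the explicit relations $u^2 + au + x = 0$, $v^2 + bv + y = 0$, and $z = bu + av$ present $R$ as module-finite over the subalgebra of $R^G$ generated by $x, y, z, a, b$. A dimension count then shows that once we know $a, b$ lie in the subring generated by $x, y, z$, the natural surjection $k[[X, Y, Z]]/(F) \twoheadrightarrow R^G$ between two $2$-dimensional complete local domains must be an isomorphism.

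\emph{Main obstacle.} The central difficulty is to show that $a$ and $b$ themselves lie in the subalgebra $k[[x, y, z]] \subseteq R^G$. A natural approach is to choose a free $R^G$-basis $\{1, \rho\}$ of $R$ with $g(\rho) = \rho + c$ for some $c \in R^G$, and write $u = u_0 + u_1 \rho$, $v = v_0 + v_1 \rho$ with $u_i, v_i \in R^G$; the computation $g(u) - u = u_1 c$ then yields $a = u_1 c$ and similarly $b = v_1 c$. The task is to recognize $c$ and the coordinates $u_i, v_i$ as power series in $x, y, z$. The hypothesis that $G$ acts freely outside the closed point, equivalently that $(a, b)$ is $\fm_{R^G}$-primary, is what should force the embedding dimension of $R^G$ to be at most $3$ and justify taking $x, y, z$ as a complete set of generators. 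This structural step is closely related to Artin's classification of wild $\bZ/2\bZ$-quotient singularities in dimension two, and I expect it to be where the bulk of the work lies.
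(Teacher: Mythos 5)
Your verification of the identity $z^2+abz+a^2y+b^2x=0$ is correct and coincides with the paper's (easy) first step, but the structural part of your plan contains a false step. From ``module-finite and generically \'etale'' you conclude that $R:=k[[u,v]]$ is a \emph{free} $R^G$-module of rank $2$; this is a non sequitur (it only gives freeness of rank $2$ at the generic point — miracle flatness would require the base $R^G$ to be regular, which is exactly what it is not), and the conclusion is actually false in the cases the lemma is used for. Already in the tame analogue ($\charac\neq 2$, $A_1$-singularity) one has $\dim_k R/\fm_{R^G}R=3$, so $R$ is not free of rank $2$; and in the wild case with $a,b\in\fm_R^2$ (the elliptic-double-point situation of the paper) one has $x\equiv u^2$, $y\equiv v^2$, $z\equiv 0 \pmod{\fm_R^3}$, and since $\fm_{R^G}=(x,y,z)$ (by the lemma itself, or by Artin's normal form) this gives $\dim_k R/\fm_{R^G}R\geq 4$. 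So the proposed choice of a free basis $\set{1,\rho}$ of $R$ over $R^G$, and the ensuing formulas $a=u_1c$, $b=v_1c$, cannot be carried out.

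More importantly, the actual content of the lemma — that $x,y,z$ generate $\fm_{R^G}$, equivalently that $a,b$ lie in $k[[x,y,z]]$ — is precisely the step you leave unproven: your ``main obstacle'' paragraph only gestures at Artin's classification, so the proposal stops short of a proof. The paper's argument does exactly this step, and note that the freeness it uses is over the regular subring $R_0=k[[x',y']]$, not over $R^G$: by Artin's theorem there are coordinates $u',v'$ such that $R$ is $R_0$-free with basis $1,u',v',u'g(v')$ and $R^G$ is $R_0$-free with basis $1,z'$ (so $R^G=k[[x',y',z']]$ for the normalized coordinates); writing $u,v$ in this basis and computing $x,y,z$ modulo $\idealn'^2$ shows that $(x,y,z)$ and $(x',y',z')$ differ by a matrix whose determinant is a unit, whence $x,y,z$ generate $\idealn'$ by Nakayama. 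A further small loose end in your write-up: the final ``surjection between two $2$-dimensional complete local domains'' step presupposes that $k[[X,Y,Z]]/(F)$ is a domain (or at least $k[[X,Y]]$-torsion-free), which also requires an argument since the lifts of $a,b$ to $k[[X,Y,Z]]$ are not canonical.
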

\begin{proof}
It is straightforward to check $z^2 + a b z + a^2 y + b^2 x = 0$
(cf.\ the first paragraph of the proof of Proposition \ref{prop:quotient singularity}).
It remains to show that $x,y,z$ generate the maximal ideal $\idealn'$ of $k[[u, v]]^G$.

By \cite{Artin:wild2}*{Theorem},
there exist local coordinates $u', v'$ with the following properties:
letting $x' = N(u')$, $y' = N(v')$, $z' = \Tr(u' g(v'))$, 
and $R_0 = k[[x', y']]$,
\begin{itemize}
\item
$k[[u, v]]$ has an $R_0$-basis $1, u', v', u' g(v')$ and 
\item
$k[[u, v]]^G$ has an $R_0$-basis $1, z'$.
\end{itemize}
Write 
\[
\vectorii{u}{v} = 
\begin{pmatrix}
p_{00} & p_{10} & p_{01} & p_{11} \\
q_{00} & q_{10} & q_{01} & q_{11} 
\end{pmatrix}
\vectoriv{1}{u'}{v'}{u' g(v')},
\]
where $p_{ij}, q_{ij} \in R_0$,
$p_{00}, q_{00} \in (x', y')$, and 
$\det \begin{pmatrix} p_{10} & p_{01} \\ q_{10} & q_{01} \end{pmatrix} \in R_0^*$.
Then a straightforward computation yields 
\[
\vectoriii{x}{y}{z} \equiv
\begin{pmatrix}
p_{10}^2 & p_{01}^2 & p_{10} p_{01} \\
q_{10}^2 & q_{01}^2 & q_{10} q_{01} \\
0 & 0 & p_{10} q_{01} + p_{01} q_{10}
\end{pmatrix}
\vectoriii{x'}{y'}{z'}
\pmod{\idealn'^2}.
\]
Since the matrix is invertible, $x, y, z$ generate $\idealn'/\idealn'^2$.
\end{proof}

\section{Non-extreme cases}

The good reduction problem of Kummer surfaces in the non-extreme cases are solved 
by Ito (see \cite{Matsumoto:SIP}*{Section 4}) if $\charac k \neq 2$,
and by Lazda--Skorobogatov \cite{Lazda--Skorobogatov:reduction of kummer} if $\charac k = 2$ and $\cA_k$ is non-supersingular.
For the readers' convenience, we include a sketch of the proof for these cases.

\subsection{Singularities of Kummer surfaces in the non-extreme cases}

We already mentioned the result for the extreme case in Proposition \ref{prop:supersingular quotient}.
In the non-extreme cases, we have the following.

\begin{prop} \label{prop:non-extreme quotient}
Let $A$ be an abelian surface over a field $F$.
If $\charac F \neq 2$, or $\charac F = 2$ and $A$ is non-supersingular,
then the minimal resolution of $A/\set{\pm 1}$ is a K3 surface.
Moreover, if all points of $A[2]$ are $F$-rational, 
then $A/\set{\pm 1}$ has only RDPs as singularities, and the number and the type of RDPs are
\begin{itemize}
\item $16 A_1$ if $\charac F \neq 2$,
\item $ 4 D_4^1$ if $\charac F = 2$ and $\prank(A) = 2$,
\item $ 2 D_8^2$ if $\charac F = 2$ and $\prank(A) = 1$.
\end{itemize}
\end{prop}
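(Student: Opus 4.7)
The scheme-theoretic fixed locus of $\iota = [-1]$ on $A$ equals $A[2]$, a $0$-dimensional subscheme of length $16$, and $q \colon A \to A/\iota$ is étale off $A[2]$, so every singularity of $A/\iota$ lies over a geometric point of $A[2]$. Under the rationality hypothesis these points number $16$, $4$, and $2$ in the three cases (since the étale part of $A[2]$ has rank $2^{\prank A}$ in characteristic $2$). After translation, each geometric fixed point becomes the origin of $A$, and the local analysis reduces to studying the formal group $\hat A$ equipped with its formal inversion; the plan is to feed the resulting action into Lemma \ref{lem:wild Z/2Z-action} (or its classical analog when $\charac F \neq 2$) and read off the singularity.

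When $\charac F \neq 2$, one linearizes the involution to $(u, v) \mapsto (-u, -v)$, giving the invariant ring $F[[x, y, z]]/(z^2 - xy)$, an $A_1$ singularity. When $\charac F = 2$ and $\prank A = 2$, the formal group is $\hat A \cong \hatGm \times \hatGm$ over $\overline F$; in the standard coordinates $u, v$ a direct computation shows $\Tr(u) = N(u) = u^2/(1+u)$ (and similarly for $v$), so Lemma \ref{lem:wild Z/2Z-action} produces (after unit-normalization) the equation $z^2 + xyz + x^2 y + xy^2 = 0$, which is $D_4^1$. When $\charac F = 2$ and $\prank A = 1$, $\hat A$ is $2$-dimensional of height $3$ and its slope filtration presents it as an extension $0 \to \hatGm \to \hat A \to \hat F \to 0$ with $\hat F$ a height-$2$ formal group (slopes $\tfrac12, \tfrac12$); choosing coordinates $u, v$ adapted to the extension ($u$ on the quotient $\hat F$, $v$ on the sub $\hatGm$) and using that the $p$-series on $\hat F$ satisfies $[2]u = u^4 + \ldots$, one computes $\Tr(u)$ of leading order $u^4$, $N(u)$ of leading order $u^2$, while $\Tr(v), N(v)$ behave as on $\hatGm$. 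Substituting into Lemma \ref{lem:wild Z/2Z-action} and, after normalization, swapping $x \leftrightarrow y$ yields $z^2 + xy^2 z + x^2 y + xy^4 = 0$, which is $D_8^2$. The geometric-fixed-point counts then give the asserted $16 A_1$, $4 D_4^1$, and $2 D_8^2$ configurations.

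With the singularities identified as RDPs, the minimal resolution $\pi \colon Y \to A/\iota$ is crepant, so $K_Y = 0$ (since $\iota^\ast$ acts trivially on $\Omega^2_A$, making $\omega_{A/\iota}$ trivial, and the codimension-$2$ fixed locus produces no ramification divisor). Rationality of the RDPs gives $\chi(\cO_Y) = \chi(\cO_{A/\iota})$, which evaluates to $2$ (via Noether's formula with $c_1 = 0$ and the explicit $c_2 = 24$, or by a direct cohomology computation on $A/\iota$). Since the $16$ exceptional $(-2)$-curves push the Picard number above what an abelian or bielliptic surface permits, and since $K_Y$ is literally trivial (not just $2$-torsion), ruling out Enriques, the surface $Y$ is a K3. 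The main obstacle is the $\prank A = 1$ case: one must work with a possibly non-split height-$3$ formal group, choose adapted coordinates, and verify carefully that the resulting equation is indeed $D_8^2$ rather than some other $D_8^r$ on Artin's list; the other singularity computations and the final K3 identification are standard.
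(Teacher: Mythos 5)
The paper itself does not reprove this proposition---it quotes Katsura's Proposition 3 for the quotient singularities and Artin's examples for the coindices---so your proposal amounts to reproving the cited facts by a direct formal-group computation fed into Lemma \ref{lem:wild Z/2Z-action}. In characteristic $\neq 2$ and in the ordinary characteristic-$2$ case this works, and your computation is correct: $a=\Tr(u)=N(u)=x$, $b=y$, giving exactly $z^2+xyz+x^2y+xy^2$, which is Artin's $D_4^1$. But the $\prank(A)=1$ case---the only genuinely delicate singularity identification---is left unverified by your own admission (``one must \dots verify carefully that the resulting equation is indeed $D_8^2$''). The non-splitness you worry about is in fact not the obstacle: over $\overline F$ (perfect) the multiplicative part of a $p$-divisible group splits off, so $\hat A\cong\hatGm\times\hat E$ with $\hat E$ the formal group of a supersingular elliptic curve, and on $\hat E$ one computes from $y^2+y=x^3$ that $\Tr(t)=t^4(1+\cdots)$ and $N(t)=t^2(1+\cdots)$. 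What remains, and what your sketch does not supply, is the normal-form analysis bringing the resulting equation $z^2+u_1x^2yz+u_2x^4y+xy^2=0$ (with units $u_1,u_2$) to Artin's form and checking that the coindex is $2$ and not some other $r$; leading-order bookkeeping alone does not discharge this.

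The K3 identification also fails as written in characteristic $2$. Triviality of $K_Y$ does not rule out Enriques surfaces there: non-classical ($\mu_2$- and $\alpha_2$-)Enriques surfaces have $\omega\cong\cO$, so ``$K_Y$ literally trivial, not just $2$-torsion'' excludes nothing. Likewise ``$\chi(\cO_{A/\iota})=2$ by a direct cohomology computation'' and ``the explicit $c_2=24$'' are unjustified for this wild quotient: there is no trace splitting of $\cO_{A/\iota}\to\pi_*\cO_A$, the involution acts trivially on $H^1(\cO_A)$ in characteristic $2$ (so naive invariants give $h^1=2$), and the topological Euler number is not multiplicative for the wildly ramified double cover, so neither shortcut produces the number you need. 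The conclusion is repairable---once the singularities are identified, the sixteen exceptional $(-2)$-classes force $\rho(Y)\geq 16$, while every non-K3 minimal surface with $K\equiv 0$ has $b_2\leq 10$, so $Y$ is K3---but that is a different argument from the one you gave, and as it stands your proof of K3-ness breaks precisely in the characteristic-$2$ cases the proposition is about.
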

For the assertion on quotient singularities, see \cite{Katsura:Kummer 2}*{Proposition 3} for a proof
(for the determination of the coindices, see \cite{Artin:wild2}*{Examples}).

\subsection{Good reduction of Kummer surfaces in the non-extreme cases}

Let $K, \cO, k$ be as in Introduction.

\begin{prop} \label{prop:quotient and fibers}
Suppose $\cA \to \Spec \cO$ is an abelian scheme over $\cO$.
Then the fibers of $\cA / \set{\pm 1}$ are isomorphic to the quotients of the fibers.
\end{prop}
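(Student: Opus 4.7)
Plan: The formation of $\cA/\set{\pm 1}$ is local on $\cA$, so after choosing a $G = \set{\pm 1}$-stable affine open $U = \Spec R \subset \cA$, the quotient is $U/G = \Spec R^G$, and the claim reduces to showing that the natural map
\[
R^G \otimes_\cO \kappa \longrightarrow (R \otimes_\cO \kappa)^G
\]
is an isomorphism for each $\kappa \in \set{K, k}$.

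For the generic fiber $\kappa = K$, this is immediate from flat base change: $R^G$ is the kernel of $g - \id \colon R \to R$ (where $g = [-1]$), and tensoring with the flat $\cO$-module $K$ preserves kernels. For the special fiber $\kappa = k$, the base change is not flat, so I split $\cA$ into two geometric pieces. On the open complement $\cA \setminus \cA[2]$ of the $2$-torsion subgroup scheme, the action of $G$ is free, so $\cA \setminus \cA[2] \to (\cA \setminus \cA[2])/G$ is a $G$-torsor, in particular étale, and étale morphisms commute with arbitrary base change. Near a point of $\cA[2]$ (which is finite flat over $\cO$, so lifts locally), I translate so that the point becomes the identity and pass to the formal completion $\widehat{\cO_{\cA, O}} \cong \cO[[t_1, \ldots, t_g]]$, on which $[-1]$ acts by the formal-group inversion $\iota$, satisfying $\iota(t_i) = -t_i + O((\sum t_j)^2)$.

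The key technical point is to show that $\cO[[\mathbf{t}]]$ is free of rank $2$ over the invariant subring $\cO[[\mathbf{t}]]^\iota$. Granted this, base change to $k$ preserves rank-$2$ freeness, so $k[[\mathbf{t}]]$ is free of rank $2$ over $\cO[[\mathbf{t}]]^\iota \otimes_\cO k$; since the latter sits inside $k[[\mathbf{t}]]^{\bar\iota}$, which is itself of index $2$ in $k[[\mathbf{t}]]$, a rank count forces equality. The main obstacle is proving the rank-$2$ freeness in residue characteristic $2$, where the averaging trick that gives an explicit basis $\set{1, t_i}$ when $2$ is invertible (via the decomposition into $\pm 1$-eigenspaces under $\frac{1}{2}(1 \pm g)$) fails. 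I would handle this by constructing a basis explicitly from the formal group law, e.g.\ using an element of the form $t_i - \iota(t_i)$, whose leading behavior is controlled by the fact that $\iota$ is inversion of a formal group law defined over $\cO$, together with the Nakayama-style check that the reduction of such an element generates the module modulo the invariant subring. Alternatively, one may invoke flatness of $\cA \to \cA/G$ using the fact, visible from Propositions~\ref{prop:supersingular quotient} and~\ref{prop:non-extreme quotient}, that every geometric fiber of $\cA/G$ has only hypersurface singularities, hence is Cohen--Macaulay, so $\cO[[\mathbf{t}]]^\iota$ is itself Cohen--Macaulay and the finite extension of rank $2$ is automatically free.
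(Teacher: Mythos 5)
Your reduction to the affine statement, the flat base change argument for the generic fiber, the injectivity of $R^G\otimes_\cO k\to(R\otimes_\cO k)^G$, and the treatment of the free locus $\cA\setminus\cA[2]$ are all fine. The proof breaks at the declared key technical point: $\cO[[\mathbf{t}]]$ is \emph{not} free (indeed not even flat) of rank $2$ over $\cO[[\mathbf{t}]]^{\iota}$. If a regular local ring is finite and faithfully flat over a local subring, then that subring is regular (descent of finite global dimension along faithfully flat maps), whereas here the invariant ring is the hypersurface $\cO[[x,y,z]]/(z^2-\alpha abz+a^2y+b^2x-\beta^2xy)$ computed in Proposition~\ref{prop:quotient singularity}, whose defining equation lies in the square of the maximal ideal; it is singular at the closed point, with an RDP or an elliptic double point in its special fiber. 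The failure is not a characteristic-$2$ pathology: already $k[[u,v]]$ is not flat over the $A_1$-singularity $k[[u^2,uv,v^2]]$ in characteristic $0$, i.e.\ $\cA\to\cA/\set{\pm 1}$ is never flat at a fixed point whose image is singular. Consequently no basis of the form $\set{1,\,t_i-\iota(t_i)}$ can exist, and the rank count that was supposed to force $\cO[[\mathbf{t}]]^{\iota}\otimes_\cO k = k[[\mathbf{t}]]^{\bar\iota}$ has nothing to stand on. The fallback argument is also invalid: miracle flatness requires the \emph{base} to be regular (and the source Cohen--Macaulay), and a finite extension of generic rank $2$ of a Cohen--Macaulay, or even Gorenstein, local ring need not be free --- in the present situation it is not.

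So the surjectivity of $R^G\otimes_\cO k\to(R\otimes_\cO k)^G$ at the fixed points must be obtained differently. One workable route is exactly what Proposition~\ref{prop:quotient singularity} of the paper does: exhibit explicit invariants $x,y,z$ over $\cO$ satisfying one relation and check that they generate the invariant ring \emph{modulo the maximal ideal}, using Artin's normal form for wild $\bZ/2\bZ$-actions (Lemma~\ref{lem:wild Z/2Z-action}) and the corresponding statement for $\mu_2$ and $\alpha_2$; this generation-after-reduction statement is precisely the base-change compatibility you need at a fixed point. Alternatively, one can note that the cokernel of $R^G/\varpi R^G\to(R/\varpi R)^G$ is the $\varpi$-torsion of $H^1(\bZ/2\bZ,R)$ and prove that this torsion vanishes. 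The paper itself does not reprove the proposition: its proof is a citation of Lazda--Skorobogatov (Proposition 4.6 of their paper on reduction of Kummer surfaces), so your attempt is a genuinely independent route, but as written its central step is false and must be replaced by one of the arguments above.
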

\begin{proof}
See \cite{Lazda--Skorobogatov:reduction of kummer}*{Proposition 4.6}.
\end{proof}

\begin{prop} \label{prop:flat blowup of RDP}
Suppose $\cX \to \Spec \cO$ is a flat morphism of finite type, with both fibers $\cX_k$ and $\cX_K$ normal surfaces.
Suppose $\cZ \subset \cX$ is a section (i.e. the composite $\cZ \to \cX \to \Spec \cO$ is an isomorphism)
such that $\cZ_k$ and $\cZ_K$ are RDPs of the fibers.
Then $\Bl_{\cZ}(\cX)_K \cong \Bl_{\cZ_K}(\cX_K)$ and $\Bl_{\cZ}(\cX)_k \cong \Bl_{\cZ_k}(\cX_k)$.
\end{prop}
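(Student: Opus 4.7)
The plan is to reduce the claim to a Hilbert--Samuel length comparison. Working Zariski-locally on $\cX$, write $\cX = \Spec R$ with $R$ an $\cO$-flat ring and $\cZ = V(I)$ for an ideal $I \subset R$ with $R/I \cong \cO$. The generic fiber assertion is automatic: $\cO \to K$ is a localization, hence flat, and blow-ups commute with flat base change, giving $\Bl_\cZ(\cX)_K \cong \Bl_{\cZ_K}(\cX_K)$ immediately.

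For the special fiber, set $R_k := R \otimes_\cO k$ and let $\bar I_k \subset R_k$ be the image of $I$; write $R_K, \bar I_K$ for the analogous objects over $K$. One has $\Bl_\cZ(\cX)_k = \Proj \bigoplus_{n}(I^n \otimes_\cO k)$ and $\Bl_{\cZ_k}(\cX_k) = \Proj \bigoplus_{n} \bar I_k^n$, and the canonical surjection $I^n \otimes_\cO k \twoheadrightarrow \bar I_k^n$ is an isomorphism for every $n$ if and only if each $R/I^n$ is $\cO$-flat (via the $\Tor$ sequence obtained by tensoring $0 \to I^n \to R \to R/I^n \to 0$ with $k$, using flatness of $R$). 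Since $R/I^n$ is supported on $\cZ \cong \Spec \cO$ it is finite as an $\cO$-module, and over the DVR $\cO$ a finitely generated module $M$ is flat (equivalently, free) iff $\length_K(M \otimes_\cO K) = \length_k(M \otimes_\cO k)$. So it suffices to verify
\[
\length(R_K/\bar I_K^n) = \length(R_k/\bar I_k^n) \qquad \text{for all } n \geq 0.
\]

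This last equality follows from the RDP hypothesis. Both $\cZ_K \subset \cX_K$ and $\cZ_k \subset \cX_k$ are hypersurface double points of embedding dimension $3$, so in each fiber, multiplication by the nonzero quadratic leading form $f_2$ of the local defining equation yields the short exact sequence
\[
0 \to I_{\bA^3}^{n-2}/I_{\bA^3}^{n-1} \xrightarrow{\cdot f_2} I_{\bA^3}^n/I_{\bA^3}^{n+1} \to I^n/I^{n+1} \to 0,
\]
giving $\dim(I^n/I^{n+1}) = \binom{n+2}{2} - \binom{n}{2} = 2n+1$ and hence $\length(R/I^n) = n^2$. This value depends only on the multiplicity ($=2$) and embedding dimension ($=3$), which agree on both fibers, so both lengths are $n^2$ and the desired flatness follows. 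I do not foresee a substantive obstacle; the only subtlety worth highlighting is the uniformity of this Hilbert--Samuel function across RDPs of every characteristic, which is what allows the argument to apply even to the wild characteristic-$2$ RDPs relevant to the rest of the paper.
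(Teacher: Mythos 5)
Your argument is correct, but it is not the route the paper takes: the paper gives no proof at all for this proposition and simply defers to Lazda--Skorobogatov (Proposition 4.1 of their paper) and Overkamp (Lemma 3.10), whereas you supply a self-contained argument. Your reduction is the natural one: blow-ups commute with the flat base change $\cO \to K$, and for the special fiber the surjections $I^n \otimes_\cO k \surjto \bar I_k^n$ are isomorphisms precisely when $\Tor_1^\cO(R/I^n,k)=0$, i.e.\ when each $R/I^n$ is $\cO$-flat; since $R/I^n$ is a finite $\cO$-module (this deserves one more line than you give it: it follows from the filtration by the $I^j/I^{j+1}$, each finitely generated over $R/I \cong \cO$, rather than from ``supported on $\cZ$'' alone), flatness is equivalent to the equality of fiberwise lengths, and both lengths equal $n^2$ because an RDP in any characteristic is a hypersurface double point of embedding dimension $3$, so its associated graded ring is $\kappa[x,y,z]/(f_2)$ with $f_2$ the nonzero quadratic initial form (here you implicitly use that the initial form generates the initial ideal, which holds because the ambient associated graded ring is a domain, and that the completed local ring is a hypersurface, which follows from normality plus embedding dimension $3$). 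What your approach buys is exactly what you highlight: the Hilbert--Samuel function of an RDP depends only on multiplicity $2$ and embedding dimension $3$, so the statement is visibly insensitive to the wild characteristic-$2$ types that matter elsewhere in the paper, and the paper becomes independent of the two external references on this point. What the paper's choice buys is brevity and slightly more general packaging (the cited results are stated in a form directly tailored to families of Kummer-type quotients); the underlying mechanism in those references is of the same flatness-of-$\cO_\cX/\cI^n$ flavor, so your proof can be regarded as an inlined version of what is being cited.
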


\begin{proof}
See \cite{Lazda--Skorobogatov:reduction of kummer}*{Proposition 4.1} or
\cite{Overkamp:Kummer}*{Lemma 3.10}.
\end{proof}

\begin{prop}
\label{prop:good reduction of non-extreme Kummer surfaces}
Let $K$ and $k$ be as in the introduction (with $\charac K = 0$).
Suppose an abelian surface $A$ over $K$ has good reduction with N\'eron model $\cA$.
Assume either $\charac k \neq 2$, 
or $\charac k = 2$ and $\cA_k$ is non-supersingular.
Then $X = \Km(A)$ has potential good reduction.
\end{prop}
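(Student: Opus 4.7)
The plan is to construct a smooth proper model of $X = \Km(A)$ over a finite extension of $\cO$ by simultaneously resolving, fiber by fiber, the singularities of the quotient $\cA/\set{\pm 1}$. First I would pass to a finite extension $K'/K$ over which all sixteen points of $A[2]$ become rational; the Néron model persists as a proper smooth abelian scheme $\cA'$ over $\cO'$, and each $K'$-rational two-torsion point extends uniquely to an $\cO'$-section of $\cA'$. Set $\cY := \cA'/\set{\pm 1}$, a flat algebraic space over $\cO'$. By Proposition~\ref{prop:quotient and fibers} its fibers are $A_{K'}/\set{\pm 1}$ and $\cA'_k/\set{\pm 1}$, and by Proposition~\ref{prop:non-extreme quotient} (applied in both characteristics, which is legal since we are in the non-extreme case) both fibers have only RDPs as singularities.

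The next step is to iteratively apply Proposition~\ref{prop:flat blowup of RDP}: find a section (or disjoint union of sections) $\cZ \subset \cY$ whose generic and special fibers are RDPs on the corresponding fibers of $\cY$, blow up to obtain $\cY' := \Bl_{\cZ}\cY$, whose fibers by the proposition are the blow-ups of the original fibers at those RDPs, and repeat on the strict transforms of the remaining singularities. Since RDPs are resolved by finitely many blow-ups at closed points, this procedure terminates with a relative scheme $\cX$ over $\cO'$ whose fibers are the minimal resolutions of $A_{K'}/\set{\pm 1}$ and $\cA'_k/\set{\pm 1}$, hence smooth. Flatness is preserved throughout, so $\cX$ is a smooth proper model of $\Km(A_{K'}) = X_{K'}$, which gives potential good reduction.

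The step I expect to be the main obstacle is arranging that the singular loci of the intermediate spaces really are disjoint unions of $\cO'$-sections, so that Proposition~\ref{prop:flat blowup of RDP} applies cleanly at every stage. When $\charac k \neq 2$ this is straightforward: the sixteen two-torsion sections are étale and remain distinct in the special fiber, and a single blow-up along each of the corresponding sixteen sections in $\cY$ resolves the sixteen $A_1$'s on both fibers simultaneously. When $\charac k = 2$ and $\cA'_k$ is non-supersingular, the two-torsion is non-étale, and the special fiber has only $4$ singular points of type $D_4^1$ (if $\prank = 2$) or $2$ of type $D_8^2$ (if $\prank = 1$), while the generic fiber still has $16$ $A_1$'s; the sixteen sections collapse in the special fiber and their images are only thickened points of $\cY_k$. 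Here one works étale-locally around each special-fiber singular point of $\cY$, chooses the successive centers of the blow-ups realizing the minimal resolution of the relevant RDP (possibly after further unramified extension of $\cO'$ to split new singular points), and glues; this is the content of Lazda--Skorobogatov~\cite{Lazda--Skorobogatov:reduction of kummer}, and it is exactly the step that fails in the extreme case, where $\cA'_k/\set{\pm 1}$ has a non-rational elliptic double point whose structure is incompatible with that of the generic fiber.
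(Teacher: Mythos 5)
Your skeleton is the paper's: extend $K$ so that the two-torsion is rational, form $\cY=\cA/\set{\pm 1}$, identify its fibers by Proposition \ref{prop:quotient and fibers}, observe via Proposition \ref{prop:non-extreme quotient} that both fibers have only RDPs, and resolve by iterating Proposition \ref{prop:flat blowup of RDP}. The case $\charac k\neq 2$ is fine as you state it. The gap is exactly at the step you flag, $\charac k=2$ with non-supersingular reduction: your mechanism --- work \'etale-locally around each singular point of $\cY_k$, choose successive centers ``realizing the minimal resolution'' of that RDP, and glue --- never verifies the hypothesis of Proposition \ref{prop:flat blowup of RDP}, namely that each center is a section which is an RDP in \emph{both} fibers. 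A center adapted to the resolution of the special fiber has no a priori reason to meet the generic fiber in a singular point at all, and writing ``this is the content of Lazda--Skorobogatov'' defers precisely the point to be proved. You also dismiss the natural candidates too quickly when you say the sixteen sections ``are only thickened points of $\cY_k$.''

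The repair, which is the paper's argument, is simpler than what you attempt and needs no case distinction or gluing. Take as center the closure $\cZ\subset\cY$ of one of the sixteen $A_1$ points of $\cY_K$: by properness of $\cY$ over $\cO$ and the valuative criterion this is a section, and its special point $\cZ_k$ is forced to be a \emph{singular} point of $\cY_k$, because if $\cY_k$ were smooth at $\cZ_k$ then the flat morphism $\cY\to\Spec\cO$ would be smooth on an open neighborhood of $\cZ_k$, and that neighborhood contains the generization $\cZ_K$, contradicting that $\cZ_K$ is an $A_1$ of $\cY_K$. Hence $\cZ$ is an RDP in both fibers ($A_1$ upstairs, one of the $D_4^1$ or $D_8^2$ points downstairs), Proposition \ref{prop:flat blowup of RDP} applies, and the same reasoning applies verbatim at every subsequent stage. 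After sixteen such blow-ups the generic fiber is $\Km(\cA_K)$, and the special fiber, having absorbed sixteen point blow-ups at RDPs (four over each $D_4^1$, eight over each $D_8^2$), is $\Km(\cA_k)$; in particular it is smooth, which gives the desired smooth proper model.
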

\begin{proof}
Let $\cY := \cA / \set{\pm 1}$.
By Proposition \ref{prop:quotient and fibers}, we have $\cY_K = A/\set{\pm 1}$ and $\cY_k = A_k/\set{\pm 1}$.
By replacing $K$ with a finite extension, we may assume that $\cA_K[2]$ and $\cA_k[2]$ consist respectively of $K$-rational points
and $k$-rational points.
Then, by Proposition \ref{prop:non-extreme quotient},
$\Sing(\cY_K)$ is $16 A_1$ and 
$\Sing(\cY_k)$ is one of $16 A_1$, $4 D_4^1$, or $2 D_8^2$.

Choose an RDP of $\cY_K$, take its closure $\cZ$ in $\cY$, and take the blow-up $\cY^{(1)} = \Bl_{\cZ} \cY$.
Then $\cZ_k$ should be a singular point of $\cY_k$, hence one of the RDPs.
By Proposition \ref{prop:flat blowup of RDP}, each fiber of $\cY^{(1)}$ is the blow-up at an RDP.
Repeating this $16$ times, we obtain a scheme $\cY^{(16)}$ whose fibers are $\Km(\cA_K)$ and $\Km(\cA_k)$.
\end{proof}

\section{Proof of main theorem}

Let $\cO$ be as before (i.e.\ a Henselian discrete valuation ring with residue field $k$).
In this section, $k$ is of characteristic $2$, and the maximal ideal of $\cO$ is denoted by $\idealp = (\varpi)$.

We first show in Section \ref{subsec:proof:preliminary} that, 
in order to prove good reduction of a K3 surface $X$,
it suffices to construct a model of $X$ satisfying a kind of semistability.
Then we give an explicit construction of such a model in Sections \ref{subsec:blow-up}--\ref{subsec:explicit computation}.
We summarize the proof of Theorem \ref{thm:good reduction:bis} in Section \ref{subsec:end of proof}. 

\subsection{Strict semistability in the broad sense} \label{subsec:proof:preliminary}

We need the following variant of the strict semistability.

\begin{defn} \label{def:strictly semistable in the broad sense}
We say that 
a flat scheme $\cX$ of finite type over $\cO$
is \emph{strictly semistable in the broad sense}
if it satisfies the following conditions.
\begin{itemize}
\item The generic fiber $X$ of $\cX$ is smooth. 
\item Every irreducible component of $\cX_k$ is smooth.
\item For each closed point $P$ of the special fiber $\cX_k$, 
$\cO_{\cX,P}$ is \'etale over $\cO[x_1, \dots, x_n] / (x_1 \dots x_r - \epsilon)$
for some $1 \leq r \leq n$ and some $\epsilon \in \idealp$.
\end{itemize}
\end{defn}
Recall that $\cX$ is \emph{strictly semistable}
if we can take $\epsilon$ to be a uniformizer of $\cO$.

The existence of such a model is quite useful for the good reduction problem:

\begin{thm} \label{thm:sufficient condition for good reduction}
Let $X$ be a K3 surface over $K$.
Suppose that the Galois representation $\Het^2(X_{\overline{K}}, \bQ_l)$ is unramified for some auxiliary prime $l \neq p$ after replacing $K$ with a finite extension,
and that $X$ admits a proper model $\cX$ over $\cO$ that is strictly semistable in the broad sense.
Then $X$ has potential good reduction.
\end{thm}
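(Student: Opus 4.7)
The plan is to adapt the argument of \cite{Matsumoto:goodreductionK3}*{Section 3} to residue characteristic $2$ by replacing Kawamata's MMP with Takamatsu--Yoshikawa's MMP \cite{Takamatsu--Yoshikawa:mixed3fold}*{Theorem 1.1}.

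First, I would run a relative MMP on the proper model $\cX / \Spec \cO$. The broad strict semistability implies that $\cX$ has at worst terminal Gorenstein (toroidal) singularities and that $(\cX, \cX_k^{\red})$ is log canonical, which provides the input required by Takamatsu--Yoshikawa. Since $\omega_X \isom \cO_X$, the relative canonical divisor $K_{\cX/\cO}$ is trivial on the generic fiber, so each MMP step can contract only components of the special fiber. This produces a birational map $\cX \rationalto \cX'$ to a relative minimal model $\cX'$ which is still a proper model of $X$ over $\cO$, with $K_{\cX'/\cO}$ relatively nef and whose special fiber $\cX'_k$ remains a strict normal crossings divisor in the broad sense.

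Next, I would classify the possible shapes of $\cX'_k$ in the Kulikov--Persson--Pinkham spirit. Because $\omega_X \isom \cO_X$ and only Gorenstein singularities have been introduced, the relative dualizing sheaf $\omega_{\cX'/\cO}$ is trivial, so (after possibly a further base change to kill torsion) $\cX'_k$ falls into one of the three standard types: (I) $\cX'_k$ is smooth; (II) $\cX'_k$ is a chain of smooth surfaces joined along elliptic curves; (III) $\cX'_k$ is a combinatorial K3 configuration of rational surfaces. The hypothesis that $\Het^2(X_{\overline K}, \bQ_\ell)$ is unramified forces the $\ell$-adic monodromy operator $N$ on $\Het^2$ to vanish, and the Rapoport--Zink/Illusie weight spectral sequence attached to the semistable model then rules out Types (II) and (III), since those configurations force $N \neq 0$. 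Hence $\cX'_k$ must be smooth, and $\cX'$ realizes potential good reduction of $X$.

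The main obstacle is the passage from strict semistability \emph{in the broad sense} to the classical strictly semistable setting in which both Takamatsu--Yoshikawa's MMP and the weight-monodromy formalism are most naturally formulated: here $\epsilon$ is only required to lie in $\idealp$ rather than to be a uniformizer. One should either check that the relevant arguments extend verbatim with this weakened condition, or, as done in \cite{Matsumoto:goodreductionK3}*{Section 3}, reduce to the strictly semistable case by a suitable base change $\cO \to \cO'$ (to absorb the factors of $\epsilon$ into a power of a uniformizer) followed by normalization and resolution of the resulting toric singularities. Once this reduction is in place, the rest of the argument follows \cite{Matsumoto:goodreductionK3}*{Section 3} essentially verbatim, with Takamatsu--Yoshikawa substituted for Kawamata at the single point where the condition $p \geq 5$ was previously used.
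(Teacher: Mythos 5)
Your second half (run a relative MMP with Takamatsu--Yoshikawa in place of Kawamata, then classify the special fiber of the minimal model Kulikov-style and use unramifiedness of $\Het^2$ via the weight spectral sequence to exclude the non-smooth types) is exactly the content of \cite{Matsumoto:goodreductionK3}*{Section 3}, which the paper simply cites with the single substitution of \cite{Takamatsu--Yoshikawa:mixed3fold}*{Theorem 1.1}; that part of your plan matches the paper.

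The gap is precisely at the step you flag but do not resolve: getting from strict semistability \emph{in the broad sense} to honest strict semistability. Your alternative (i), running the MMP and the monodromy argument directly on the broad model, does not work as stated: at a double point $x_1x_2=\epsilon$ with $v(\epsilon)\geq 2$ the total space is singular along the whole curve $x_1=x_2=\varpi=0$ (and similarly at triple points), so these singularities are canonical toroidal but not terminal, the model is not regular, and the Rapoport--Zink weight spectral sequence you invoke is formulated for strictly semistable (or at least log smooth, with extra work) schemes, so the Type I/II/III exclusion cannot be quoted verbatim in this setting. Your alternative (ii), a base change absorbing $\epsilon$ into a power of a uniformizer followed by normalization and toric resolution, is the right kind of fix, but you neither carry it out nor can you attribute it to \cite{Matsumoto:goodreductionK3}*{Section 3}, which starts from a strictly semistable model and contains no such reduction. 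The paper closes exactly this hole differently and cleanly: broad strict semistability implies that $\cX$ is log smooth over $\cO$ (by an argument as in \cite{Saito:logsmooth}*{Lemma 1.7}), and then \cite{Saito:logsmooth}*{Theorem 1.8} produces, after a finite extension $\cO'/\cO$, a proper strictly semistable $\cX'\to\cX\otimes_{\cO}\cO'$ that is an isomorphism over the interior; only then is the MMP-plus-monodromy machinery applied. That reduction is the one genuinely new ingredient of this theorem, and in your proposal it remains an acknowledged but unproven step.
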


Here, a representation of $\Gal(\overline{K}/K)$ is \emph{unramified}
if the inertia subgroup of $\Gal(\overline{K}/K)$ acts trivially.

\begin{proof}
By replacing $K$ with a finite extension (which does not affect the semistability in the broad sense), 
we may assume that $\Het^2(X_{\overline{K}}, \bQ_l)$ is unramified.

By an argument similar to \cite{Saito:logsmooth}*{Lemma 1.7}, 
$\cX$ is log smooth over $\cO$.
By \cite{Saito:logsmooth}*{Theorem 1.8},
there exist a finite extension $\cO'$ of $\cO$ and 
a morphism $\cX' \to \cX \otimes_{\cO} \cO'$ that is isomorphic above the interior of $\cX$
such that $\cX'$ is proper and strictly semistable over $\cO'$.
Here the interior of $\cX$ is the complement of the union of intersections of two or more components of $\cX_k$
(hence the interior contains the generic fiber).
Thus we may assume that $\cX$ is strictly semistable.

Then we can apply the arguments given in \cite{Matsumoto:goodreductionK3}*{Section 3},
with the use of 
Kawamata's MMP replaced with Takamatsu--Yoshikawa's \cite{Takamatsu--Yoshikawa:mixed3fold}*{Theorem 1.1},
which, unlike Kawamata's, has no restriction on the residue characteristic.
\end{proof}

If $X = \Km(A)$ is the Kummer surface associated to an abelian surface $A$ with good reduction,
then, by the N\'eron--Ogg--Shafarevich criterion, the Galois representation $\Het^1(A_{\overline{K}}, \bQ_l)$ is unramified, and then by the isomorphism
\[ \Het^2(\Km(A)_{\overline{K}}, \bQ_l) \cong \bigoplus_{A[2]} \bQ_l(-1) \oplus \bigwedge^2 \Het^1(A_{\overline{K}}, \bQ_l), \]
$\Het^2(\Km(A)_{\overline{K}}, \bQ_l)$ is also unramified after replacing $K$ with a finite extension.
Thus, our goal is to construct a strictly semistable model in the broad sense of the Kummer surface $X = \Km(A)$.

\subsection{Overview of the blow-up construction} \label{subsec:blow-up}

The main idea is to perform a (weighted) blow-up with respect to suitable coordinates.

Suppose $\cY$ is a smooth affine scheme over $\cO$ of relative dimension $2$
and $G = G_{\alpha, \beta}$ is a group scheme of length $2$ (see Section \ref{subsec:quotient 2}).
Suppose $G$ acts on $\cY$, with the quotient morphism denoted by $\map{\pi}{\cY}{\cY / G = \cX}$,
 and the following properties hold.
\begin{itemize}
\item $\Fix(G_k \actson \cY_k) = \set{y}$, 
and the quotient singularity $\pi(y) \in \cX_k$ is either $\EDP_{4,4}^{(r)}$ or $\EDP_{2,8}^{(r)}$.
\item All points of $\Fix(G_K \actson \cY_K)$ specializes to $y$. 
\end{itemize}
Let $\cY' \to \cY$ be the weighted blow-up specified below, with center supported on $y$. 
The $G$-action extends to $\cY'$. Let $\cX' := \cY'/G$.
We list the properties of $\cY'$ and $\cX'$ to be established.
The construction and the proof will be given in Section \ref{subsec:explicit computation}.

\begin{claim} \label{claim:description of X'}
The following holds.
\begin{enumerate}
\item $\cX'$ is a normalized (weighted) blow-up of $\cX$.
\item $\cX'_k$ consists of two components: the strict transform $Z$ of $\cX_k$ and the exceptional divisor $E$.
\item The $G$-action on $\cY' \setminus \pi^{-1}(Z)$ 
induces an action of $G' = G_{\alpha',\beta'}$ with isolated fixed locus, and the quotient is $\cX' \setminus Z$.
\label{claim:item:X':action}
\item The exceptional divisor $E$ is a normal projective surface and satisfies $H^1(\cO_E) = 0$ and $K_E = 0$. 
In particular, if $\Sing(E)$ consists only of RDPs, then $E$ is an RDP K3 surface. 
\item The image of $\Sing(\cX'_K)$ by the specialization map is contained in $\Sing(E) \setminus Z$.
\end{enumerate}
\end{claim}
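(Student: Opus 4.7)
The plan is to prove all five items by direct local computation in explicit coordinates. Proposition \ref{prop:supersingular quotient}, Proposition \ref{prop:quotient singularity} and the alternative equations of Remark \ref{rem:another form of EDP2,8} supply formal coordinates $u,v$ on $\cY$ at $y$ together with the precise Tate--Oort form of the $G$-action. I would choose weights $(w_u,w_v)$ adapted to the relevant EDP equation---one pair for each of $\EDP^{(r)}_{4,4}$ and $\EDP^{(r)}_{2,8}$---define $\map{\phi}{\cY'}{\cY}$ as the corresponding weighted blow-up centered at $y$, cover $\cY'$ by its standard affine charts (each regular over $\cO$), and compute $\cX' = \cY'/G$ chart-by-chart.

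Items (1) and (2) are essentially formal once the charts are laid out. The $G$-action preserves the center and lifts to $\cY'$, giving $\map{\phi_G}{\cX'}{\cX}$; comparing invariant subrings chart-by-chart identifies $\cX'$ with the normalized (weighted) blow-up of $\cX$, and the decomposition $\cX'_k = Z \cup E$ follows from the same computation. For item (3), I would track how the Tate--Oort parameters transform between charts: on the chart where the exceptional coordinate is a unit along the special fiber, a power of the uniformizer gets absorbed into the $\delta$-operator, producing new parameters $(\alpha',\beta')$ with $\alpha'\beta' = 2$; isolatedness of the new fixed locus is then verified by checking that $(\delta'(u'),\delta'(v'))$ cut out finitely many points. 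Item (5) follows immediately: points of $\Fix(G_K \actson \cY_K)$ specialize to $y$ by hypothesis, hence after blow-up they lie in the exceptional divisor of $\cY'$, so their images in $\cX'_K$ specialize into $\Sing(E) \setminus Z$.

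Item (4) is the substantive step. Normality of $E$ can be read off its defining equation in each chart once the weights are fixed. For $K_E = 0$, my plan is to use adjunction in the form $K_E = (K_{\cX'} + E)|_E$ together with the fact that the EDP is Gorenstein and that the weights will be chosen so that $\phi_G$ is crepant, reducing $K_E$ to the self-intersection class $E|_E$, which the chart computation shows is trivial. The vanishing $H^1(\cO_E) = 0$ I would handle by identifying $E$ (or its resolution) as a finite degree-$2$ cover of a weighted projective plane branched along an explicit curve derived from the EDP equation, and computing cohomology on the base.

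The main obstacle is choosing a single set of weights that simultaneously (a) keeps $\cY'$ regular, (b) yields a well-defined $G_{\alpha',\beta'}$-action of length $2$ with isolated fixed locus on the relevant open of $\cY'$, and (c) makes the exceptional divisor $E$ satisfy the Calabi--Yau-type numerical conditions of (4). Because the shape of the EDP equation and the reduction type of $G$ (to $\mu_2$, $\alpha_2$, or $\bZ/2\bZ$) both influence these computations, a case analysis across the two EDP types and the possible specializations of $G$ is unavoidable.
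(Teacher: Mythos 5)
Your overall strategy (explicit chart computation of a weighted blow-up of $\cY$ using the coordinates of Proposition \ref{prop:quotient singularity}, tracking the Tate--Oort parameters through the charts, and getting $K_E=0$, $H^1(\cO_E)=0$ from a weighted-hypersurface/adjunction argument) is the same as the paper's, but there is a genuine gap in how you set up the blow-up. You take the center to be the point $y$ with weights $(w_u,w_v)$ ``adapted to the relevant EDP equation,'' i.e.\ determined by the special fiber alone. In the paper the center is the ideal $(u,v,\epsilon)$ (ordinary in the $\EDP_{4,4}$ case, weights $(3,1,1)$ in the $\EDP_{2,8}$ case), where $\epsilon\in\idealp$ is an \emph{arithmetic} parameter chosen, after a normal-form coordinate change on $a=\delta(u)$, $b=\delta(v)$ and a finite extension of $\cO$, so that $a_{ij}/\epsilon^{2}$, $b_{ij}/\epsilon^{2}$, $\beta^2/\epsilon^{6}$ (resp.\ $a_i/\epsilon^{8-2i}$, $b_i/\epsilon^{6-2i}$, $\beta^2/\epsilon^{10}$) all lie in $\cO$ with at least one a unit. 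This choice is not cosmetic: item (3) needs $\beta'=\epsilon^{-3}\beta$ (resp.\ $\epsilon^{-5}\beta$) to lie in $\cO$ for $G_{\alpha',\beta'}$ to exist as a group scheme over $\cO$, so the ``vertical weight'' is constrained by $v(\beta)$, not by the EDP type; and the maximality condition (some rescaled coefficient a unit) is what makes the reduction $\overline{F'}$ on the $\epsilon$-chart nondegenerate, which is where normality of $E$, the identification of $E$ as a degree-$10$ (resp.\ $18$) hypersurface in $\bP(5,2,2,1)$ (resp.\ $\bP(9,6,2,1)$), and the statement that $\Sing(\cX'_K)$ specializes exactly into $\Sing(E)\setminus Z$ (namely into $(\overline{a'}=\overline{b'}=0)$) come from. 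With weights chosen only from the EDP equation, the $\epsilon$-chart description can fail outright (if $v(\epsilon)$ is too large the coefficient $\epsilon^{-6}\beta^2$ is not integral and the normalization changes the exceptional divisor), or degenerate (if too small, $E$ just reproduces the original EDP and nothing is gained), so the case analysis you anticipate cannot be run on the special fiber data alone.

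Two smaller points. Your desideratum (a) that $\cY'$ be regular is neither needed nor generally achievable (the blow-up at $(u,v,\epsilon)$ with $v(\epsilon)>1$, and the weighted blow-up in the $\EDP_{2,8}$ case, are not regular); what the paper controls is $\cX'=\cY'/G$, with semistability only restored later by the further blow-up $\cX''$. And for $H^1(\cO_E)=0$, describing $E$ as a double cover ``branched along a curve'' is the wrong picture in characteristic $2$ (the cover $z'^2=\cdots$ is inseparable or Artin--Schreier); the paper avoids this by reading off $H^1(\cO_E)=0$ and $K_E=0$ directly from the fact that $E$ is a hypersurface whose degree equals the sum of the weights of the ambient weighted $\bP^3$, which is also the cleanest route for your item (4).
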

Here, an RDP K3 surface is a proper surface with only rational double point singularities (if any)
whose minimal resolution is a K3 surface.

\begin{claim} \label{claim:description of Sing}
After replacing $k$ with a finite separable extension, 
we have the following description of $\Sing(Z)$ and $\Sing(E)$.
\begin{enumerate}
\item
If $y$ is $\EDP_{4,4}^{(r)}$, then there are $P_1, \dots, P_5 \in Z \cap E$ and the following hold.
\begin{enumerate}
\item $\Sing(Z) = \set{P_1, \dots, P_5}$, all RDPs of type $A_1$.
\item $\Sing(E) \cap Z = \set{P_1, \dots, P_5}$, all RDPs of type $A_1$.
\item 
$\Sing(E) \setminus Z$ is one of 
$16 A_1$, $4 D_4^0$, $2 D_8^0$, $2 E_8^0$, $1 D_{16}^0$, or $1 \EDP_{2,8}^{(0)}$.
\label{claim:item:44:same type} 
\end{enumerate}
\item
If $y$ is $\EDP_{2,8}^{(r)}$, then there are $Q_1, \dots, Q_4 \in Z \cap E$ and the following hold.
\begin{enumerate}
\item $\Sing(Z) = \set{Q_1, \dots, Q_4}$, $Q_1$ is a quotient singularity of type ${(1,1)}/{3}$, and all others are RDPs of type $A_1$.
\item $\Sing(E) \cap Z = \set{Q_1, \dots, Q_4}$, $Q_1$ is an RDP of type $A_2$, and all others are RDPs of type $A_1$.
\item Every element of $\Sing(E) \setminus Z$ is one of the following:
$A_1$, $D_4^0$, $D_8^0$, $D_{12}^0$, $D_{16}^0$, $E_8^0$, or $\EDP_{2,6}^{(0)}$.
\item If all singularities of $\Sing(E) \setminus Z$ are of the same type, then $\Sing(E) \setminus Z$ is one of 
$16 A_1$, $4 D_4^0$, $2 D_8^0$, $2 E_8^0$, or $1 D_{16}^0$.
\label{claim:item:28:same type} 
\end{enumerate}
\end{enumerate}
\end{claim}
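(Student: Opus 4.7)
My plan is to carry out an explicit chart-by-chart computation on the (weighted) blow-up $\cY'\to\cY$ and the induced quotient $\cX'=\cY'/G$. By Proposition \ref{prop:quotient singularity} I may fix coordinates $u,v$ on $\cY$ near $y$ so that the quotient singularity $\pi(y)$ is presented by the equation for $\EDP_{m,n}^{(r)}$ in Definition \ref{def:equations of EDPs} (or, in the $(2,8)$ case, by the alternative form of Remark \ref{rem:another form of EDP2,8}, which is better adapted to the asymmetry $m\ne n$). I then take $\cY'$ to be the weighted blow-up whose weights on $(u,v,\varpi)$ are balanced in the $(4,4)$ case and asymmetric (the ratio being forced by the exponents $(m,n)=(2,8)$) in the second case. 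The asymmetric weighting in the $(2,8)$ case is precisely what will produce the lone singularity of type ${(1,1)}/{3}$ on $Z$ together with its matching $A_2$ on $E$.

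In each standard affine chart of $\cY'$ I would (i) read off the equations of the strict transform $Z$ of $\cX_k$ and of the exceptional divisor $E$, (ii) extend the $G$-action and apply Proposition \ref{prop:quotient singularity} again to obtain local equations for $\cX'=\cY'/G$, and (iii) classify each resulting germ via its local equation. The intersection $Z\cap E$ is identified with the $G$-quotient of the exceptional weighted projective line of $\cY'\to\cY$, which has the five (resp.\ four) $G$-orbits counted in (1) and (2). A direct tangent-cone calculation then identifies every point of $Z\cap E$ as an $A_1$ on both $Z$ and $E$ in the $(4,4)$ case; in the $(2,8)$ case the orbifold vertex of the weighted projective line gives the ${(1,1)}/{3}$ on $Z$ and the $A_2$ on $E$, while the other three orbits are $A_1$'s on both sides.

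For $\Sing(E)\setminus Z$, Claim \ref{claim:description of X'}(\ref{claim:item:X':action}) tells us that the relevant action on $\cY'\setminus\pi^{-1}(Z)$ is by a length-$2$ group scheme $G'=G_{\alpha',\beta'}$ over $\cO$, whose generic fiber $G'_K$ is $\mu_2\cong\bZ/2\bZ$ (since $\charac K=0$) and therefore has an isolated fixed locus, of length at most the remaining geometric $2$-torsion of the underlying abelian surface. Each fixed point of $G'_K$ specializes to a point of $E$, and the resulting quotient germ is classified by one more application of Proposition \ref{prop:quotient singularity}: the possible types are exactly the $A_1$, $D^0_*$, $E^0_*$, $\EDP_{2,6}^{(0)}$, $\EDP_{2,8}^{(0)}$ appearing in the statement. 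The Tyurina-number/length formula in the second half of Proposition \ref{prop:quotient singularity} then constrains the admissible collision patterns, and the homogeneity hypothesis in (1)(c) and (2)(d) further cuts the list down to the tabulated configurations.

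The main obstacle will be the exhaustive case analysis in (2)(c) and (2)(d): one has to show that every specialization pattern of $\Fix(G'_K)$ onto $E$ yields a singularity in the allowed seven-element list and nothing else, and that under the ``all of the same type'' assumption only the five balanced configurations $16A_1$, $4D_4^0$, $2D_8^0$, $2E_8^0$, $1D_{16}^0$ are actually realized. I plan to organize this by the partition of $\deg\Fix(G'_K\actson\cY'_K)$ induced by specialization, match each partition to an explicit local model via Remarks \ref{rem:EDPs} and \ref{rem:another form of EDP2,8}, and invoke the Artin/Wagreich classification mentioned in Remark \ref{rem:EDPs} to recognize each limiting germ.
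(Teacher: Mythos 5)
Your overall strategy coincides with the paper's (explicit chart computations on a weighted blow-up of $\cY$, repeated use of Proposition \ref{prop:quotient singularity}, and a Tyurina-number count to constrain configurations), but there is a concrete gap at the very first step: you take the center of the blow-up to be $(u,v,\varpi)$ with weights depending only on $(m,n)$. In the paper the center is $(u,v,\epsilon)$ for a calibrated element $\epsilon\in\idealp$, chosen \emph{after a further finite (possibly ramified) extension of $\cO$, not merely of $k$}, so that the rescaled coefficients $a_{10}\epsilon^{-2},a_{01}\epsilon^{-2},b_{10}\epsilon^{-2},b_{01}\epsilon^{-2},\beta^{2}\epsilon^{-6}$ (resp.\ $a_{i}\epsilon^{-(8-2i)}$, $b_{1}\epsilon^{-4}$, $\beta^{2}\epsilon^{-10}$) all lie in $\cO$ with at least one of them a unit. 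This normalization is precisely what makes the special fiber of the exceptional chart nondegenerate. If one uses $\varpi$ and all these rescaled quantities still lie in $\idealp$, the equation $\overline{F'}$ collapses to $z'^2+x'^4y'+x'y'^4$ (resp.\ $z'^2+x'^3+y'^9$), i.e.\ $E$ reacquires an elliptic singularity of type $\EDP_{4,4}^{(0)}$ (resp.\ $\EDP_{2,8}^{(0)}$) at the origin, which is not in the list of (1)(c) (resp.\ (2)(c)); so the claim as stated fails for your blow-up. Moreover the entire classification of $\Sing(E)\setminus Z$ --- the stratification by $c=(b'_{10}:a'_{10}:b'_{01}:a'_{01})\in\bP^3$ relative to the Segre quadric and the twisted cubics in the $(4,4)$ case, and by the vanishing pattern of $(a'_1,a'_2,a'_3,b'_1)$ in the $(2,8)$ case --- starts from ``not all reduced coefficients vanish,'' which only the tuned $\epsilon$ guarantees. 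So the missing idea is: calibrate the blow-up parameter to the valuations of the coefficients of $a$, $b$ and of $\beta$, extending $\cO$ as needed.

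Two secondary points. First, your identification of $P_1,\dots,P_5$ with ``the five $G$-orbits'' of the exceptional weighted projective line cannot be right: $G$ has length $2$ and $Z\cap E$ is a curve; in the paper the five points are cut out on $Z\cap E$ by the vanishing of the explicit section $s=z^2/x^5$, i.e.\ by the quintic part $x^4y+xy^4$ of the $\EDP_{4,4}$ equation, and are the points with $(x:y)\in\bP^1(\bF_4)$ (this is also where the finite separable extension of $k$ enters). Your orbifold-vertex explanation of $Q_1$ (the $\frac{(1,1)}{3}$ on $Z$ paired with the $A_2$ on $E$) is in the right spirit, but it still has to be verified in the chart, where the paper exhibits the toric local model. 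Second, ``one more application of Proposition \ref{prop:quotient singularity}'' does not by itself yield (1)(c)--(2)(d): the proposition gives the shape of the quotient equation and the count $\tau=2\deg\Fix=32$, but identifying the individual types ($A_1$, $D_*^0$, $E_8^0$, the EDPs) and excluding, e.g., $D_{12}^0$ and $\EDP_{2,6}^{(0)}$ under the same-type hypothesis requires the normal-form analysis of $\overline{F'}$ together with the divisibility bookkeeping of the $\tau$-values; that case analysis is the actual content of the claim and is only gestured at in your plan.
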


Let $\cX'' \to \cX'$ be the blow-up specified below (with center supported on $\Sing(Z)$).

\begin{claim}
$\cX''$ is strictly semistable in the broad sense outside $\Sing(E) \setminus Z$.
\end{claim}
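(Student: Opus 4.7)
I would verify the three conditions of Definition~\ref{def:strictly semistable in the broad sense} étale-locally at each closed point $P\in\cX''_k$ whose image $P'\in\cX'$ lies outside $\Sing(E)\setminus Z$. Condition (i) is immediate: the birational maps $\cX''\to\cX'\to\cX$ have centers supported in the special fibers, so $\cX''_K\cong X=\Km(A)$ is smooth. For the remaining conditions, I would split the analysis into three cases according to the image $P'$.

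If $P'$ lies on the smooth locus of both $Z$ and $E$, then $\cX''\to\cX'$ is an isomorphism near $P$; by the construction of $\phi$ in Section~\ref{subsec:explicit computation}, the two components $Z$ and $E$ meet transversally along a smooth curve, yielding the local étale model $\cO[x,y,z]/(xy-\varpi^a)$ on $Z\cap E$ (with $a\geq 1$ controlled by the weighted blow-up) and a smooth chart away from this intersection. If $P'$ is one of the $A_1+A_1$ points $P_i$ (in the $\EDP_{4,4}^{(r)}$ case) or $Q_j$ for $j=2,3,4$ (in the $\EDP_{2,8}^{(r)}$ case), I would show that the blow-up of $\cX'$ at the reduced point $P'$ simultaneously resolves the $A_1$ surface singularities of $Z$ and of $E$ and introduces a new smooth exceptional component $F_{P'}$ meeting the strict transforms $\tilde Z$ and $\tilde E$ transversally. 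This gives either a double-point model $xy=\varpi^a$ along pairwise intersection curves or a triple-point model $\cO[x,y,z]/(xyz-\varpi^a)$ at the triple intersection.

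The delicate case is $P'=Q_1$, where $Z$ carries a cyclic quotient singularity of type $\frac{1}{3}(1,1)$ and $E$ an $A_2$ RDP. A single ordinary blow-up of $Q_1$ will neither resolve $Z$ nor produce a semistable configuration; instead, one must perform a (weighted or iterated) blow-up whose effect on the special fiber is to introduce a chain of two smooth exceptional components $F_{Q_1}^1, F_{Q_1}^2$, in a manner analogous to the Hirzebruch--Jung resolution of a cyclic quotient. After this, $\tilde Z$ should become smooth and each stratum of $\tilde Z\cup\tilde E\cup F_{Q_1}^1\cup F_{Q_1}^2$ should fit the required model $x_1\cdots x_r=\varpi^a$ for $r\in\{2,3\}$. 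The main obstacle is precisely this $Q_1$ case: choosing the blow-up data that simultaneously resolves the non-Gorenstein surface singularity on $Z$ and the threefold structure of $\cX'$ at $Q_1$, and then verifying by an explicit coordinate computation (using the invariants of Proposition~\ref{prop:quotient singularity}) that every stratum admits the required étale local form with $\epsilon\in\idealp$; by contrast, the $A_1+A_1$ case should reduce to a standard local calculation once the threefold equation at $P_i$ or $Q_j$ is written down.
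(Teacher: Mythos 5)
There is a genuine gap: your write-up defers precisely the content that constitutes the proof, and where it does commit to a concrete choice it picks the wrong blow-up center. In the paper, $\cX''$ is obtained by blowing up explicitly specified ideals that are supported at the points of $\Sing(Z)$ but are \emph{not} the reduced points: in the $\EDP_{4,4}^{(r)}$ case the center over $\cX'_x$ is $(s,z'',x,z',\epsilon,q)$, and at $Q_1$ in the $\EDP_{2,8}^{(r)}$ case it is $(t_3,v,y,z',w_1,\epsilon,w_0)$; the appearance of $\epsilon$ (rather than $\varpi$) and of the auxiliary element $s$ is essential. Indeed, étale-locally at a point $P_i$ the threefold $\cX'$ is the cone over the Veronese surface with one cone coordinate set equal to the constant $\epsilon$ (the rank-one condition on the symmetric matrix with entries $s,z',z''$, $x,\epsilon,q$), and blowing up the vertex ideal $(s,z',z'',x,q,\epsilon)$ gives charts such as $\cO[s,a,b]/(sab-\epsilon)$ with $a=z'/s$, $b=z''/s$, which is exactly the local form required by Definition \ref{def:strictly semistable in the broad sense}. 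Your proposal instead blows up the reduced closed point. When $v(\epsilon)=1$ the two centers agree, but when $v(\epsilon)\geq 2$ (which does occur, cf.\ the choice of $\epsilon$ and the examples) the reduced-point blow-up additionally adjoins $w=\varpi/s$, and the corresponding chart is $\cO[s,a,b,w]/(sw-\varpi,\ ab-u\,s^{\,n-1}w^{\,n})$ with $n=v(\epsilon)$, $u\in\cO^*$. Its special fiber has four local branches through the point $s=a=b=w=0$, whereas a model étale over $\cO[x_1,x_2,x_3]/(x_1\cdots x_r-\epsilon)$ admits at most three; so your $\cX''$ is not strictly semistable in the broad sense there, and the case split ``$A_1+A_1$ points are a standard local calculation'' does not go through as stated.

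At $Q_1$ the situation is worse: you correctly observe that an ordinary point blow-up cannot work, but you leave both the choice of center and the verification open, calling it ``the main obstacle''---yet this is exactly what the claim requires one to supply. The paper handles it with a single blow-up at the ideal $(t_3,v,y,z',w_1,\epsilon,w_0)$, verified on the explicit toric chart $V'$ attached to the monoid generated by $(3,0,0),\dots,(0,0,3)$; this produces two exceptional components $E_1\cong\bP^2$ and $E_2\cong\bF_2$ together with the resolutions $\tilde Z$, $\tilde E$ of the $\frac{1}{3}(1,1)$ and $A_2$ singularities, with triple-point equations such as $\frac{v}{t_3}\frac{w_1}{t_3}t_3-\epsilon s$. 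Your guess of a Hirzebruch--Jung-style chain is not a substitute for this computation, and without specifying the center involving $\epsilon$ and checking the resulting charts, the claim remains unproved.
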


%

\begin{rem}
For our purpose (Theorem \ref{thm:good reduction:bis}),
what we need directly are resolutions of 
EDPs of type $\EDP_{4,4}^{(r)}$ and $\EDP_{2,8}^{(r)}$ with $r = 1$.
However we also consider the case $r = 0$ because 
we need the resolution of $\EDP_{2,8}^{(0)}$ for the proof of the case of $\EDP_{4,4}^{(1)}$.
The arguments for $r = 0$ and $r = 1$ are parallel.
\end{rem}
\begin{rem} \label{rem:inseparable kummer}
Our construction also shows that the (affine) surface $E \setminus Z$ is the quotient of $\pi^{-1}(E \setminus Z)$ by $G' \in \set{\mu_2, \alpha_2}$.
Although we will not show it in this paper, 
this can be extended to a quotient morphism $A' \to E$ by $G'$ from a proper non-normal surface $A'$ that satisfies
$\omega_{A'} \cong \cO_{A'}$ and $h^i(\cO_{A'}) = 1, 2, 1$ for $i = 0, 1, 2$.
We say that such $A'$ is an abelian-like surface and $E$ is an inseparable analogue of Kummer surface.
Details will be given in another paper.
\end{rem}

\subsection{Explicit computation of the blow-up} \label{subsec:explicit computation}
\subsubsection{Case of $\EDP_{4,4}^{(r)}$} \label{subsubsec:case 4,4}

First we consider the case where $y \in \cY_k/G$ is $\EDP_{4,4}^{(r)}$.
Let $u,v \in \cO_{\cY,y}$ be local coordinates satisfying 
$(u = v = 0) \subset \Fix(G \actson \cY)$.
Let $x,y,z,a,b \in \cO_{\cY,y}^G$ as in Proposition \ref{prop:quotient singularity}.
Then, by the proposition, the maximal ideal of $\cO_{\cY,y}^G$ is equal to $(x,y,z) + \idealp$,
and we have $F = 0$,
where \[ F = z^2 - \alpha z a b + a^2 y + b^2 x - \beta^2 x y. \]
Since $(u = v = 0) \subset \Fix(G)$, we have $a, b \in (x, y, z)$ (not only $a, b \in (x,y,z) + \idealp$). 
By the description of the blow-up of $\EDP_{4,4}^{(r)}$ at the origin,
we have $\overline{a}, \overline{b} \in \idealm^2$ (where $\idealm = (x,y,z)$)
and moreover 
\[
\overline{a^2 y + b^2 x} \equiv c_{50} x^5 + c_{41} x^4 y + c_{14} x y^4 + c_{05} y^5
\pmod{((x,y)^6 + (z))} 
\]
for some $c_{ij} \in k$.
Here the overline denotes the reduction modulo $\idealp$.
By a coordinate change (on $u, v$), and after replacing $k$ with a finite separable extension,
we may assume $\overline{a} \equiv x^2$, $\overline{b} \equiv y^2 \pmod{((x, y)^3 + (z))}$,
and furthermore
\begin{align*}
a &\equiv x^2 + a_{10} x + a_{01} y, \\
b &\equiv y^2 + b_{10} x + b_{01} y \pmod{((x, y)^3 + (z))}
\end{align*}
with $a_{ij},b_{ij} \in \idealp$.
After replacing $\cO$ with a finite extension if necessary,
we take $\epsilon \in \idealp$ satisfying the following condition:
\[
\frac{a_{10}}{\epsilon^2}, 
\frac{a_{01}}{\epsilon^2}, 
\frac{b_{10}}{\epsilon^2}, 
\frac{b_{01}}{\epsilon^2}, 
\frac{\beta^2}{\epsilon^6} \in \cO,
\quad 
\text{and moreover at least one} \in \cO^*.
\] 
Let $\cY' \to \cY$ be the blow-up at the ideal $(u, v, \epsilon)$.
Then $\cX' := \cY' / G$ is the normalized weighted blow-up at $(x, y, z, \epsilon)$ with respect to the weight $(2, 2, 5, 1)$.

We have $\cX' = \cX'_{\epsilon} \cup \cX'_{x} \cup \cX'_{y}$, where $\cX'_{?} = \Spec A_{?}$ are the affine open subschemes of the blow-up $\cX'$ with the obvious meaning.
We first consider $\cX'_{\epsilon} = \Spec A_{\epsilon}$, whose special fiber is equal to $E \setminus Z$. 
The $\cO$-algebra $A_{\epsilon}$ is \'etale over the subalgebra generated by 
\[
x' := \frac{x}{\epsilon^2}, \quad
y' := \frac{y}{\epsilon^2}, \quad
z' := \frac{z}{\epsilon^5},
\]
subject to $F' = 0$, 
where 
$F'$ is naively ``$\epsilon^{-10} F$'', i.e., 
\[ F' = z'^2 - \epsilon^3 \alpha a' b' z' + a'^2 y' + b'^2 x' - (\epsilon^{-6} \beta^2) x' y' \]
where $a' := \epsilon^{-4} a$ and $b' := \epsilon^{-4} b$.
Let $a_{ij}' = \overline{\epsilon^{-2} a_{ij}} \in k$ and 
$b_{ij}' = \overline{\epsilon^{-2} b_{ij}} \in k$. 
Here, again, the overline denotes the reduction modulo $\idealp$.
We have $A_{\epsilon} \otimes_{\cO} k = k[x',y',z'] / (\overline{F'})$,
$\overline{F'} = z'^2 + a'^2 y' + b'^2 x' - \overline{\epsilon^{-6} \beta^2} x' y' $,
$a' = x'^2 + a'_{10} x' + a'_{01} y'$,
$b' = y'^2 + b'_{10} x' + b'_{01} y'$.

Suppose $\epsilon^{-6} \beta^2 \in \cO^*$, hence its image $\overline{\epsilon^{-6} \beta^2} \in k$ is nonzero.
Then since $\overline{F'}_{x' y'} = \overline{\epsilon^{-6} \beta^2} \neq 0$, 
every singularity of $E \setminus Z$ is of type $A_1$,
and considering the degrees of $\overline{F'}_{x'}$ and $\overline{F'}_{y'}$ we conclude that 
the singularity of $E \setminus Z$ is $16 A_1$.
%

Suppose $\epsilon^{-6} \beta^2 \in \idealp$.
Then we have 
\begin{align*}
\overline{a'} &= x'^2 + a'_{10} x' + a'_{01} y', \\
\overline{b'} &= y'^2 + b'_{10} x' + b'_{01} y' 
\end{align*}
with $a'_{ij},b'_{ij} \in k$, not all $0$ by the definition of $\epsilon$.
Then 
\[
\overline{F'} = z'^2 + x'^4 y' + x' y'^4 + b_{10}'^2 x'^3 + a_{10}'^2 x'^2 y' + b_{01}'^2 x' y'^2 + a_{01}'^2 y'^3.
\]
To describe the singularities, we consider 
$c := (b'_{10} : a'_{10} : b'_{01} : a'_{01}) \in \bP^3$
and the following subsets of $\bP^3$:
\begin{align*}
Q &= \set{(t_0 : t_1 : t_2 : t_3) \mid t_1 t_2 - t_0 t_3 = 0}, \\
\Gamma_1 &= \set{(t_0 : t_1 : t_2 : t_3) \mid t_1 t_2 - t_0 t_3 = t_1^2 - t_0 t_2 = t_2^2 - t_1 t_3 = 0}, \\
\Gamma_2 &= \set{(t_0 : t_1 : t_2 : t_3) \mid t_1 t_2 - t_0 t_3 = t_2^2 - t_0 t_1 = t_1^2 - t_2 t_3 = 0}.
\end{align*}
$Q$ is the image of the Segre embedding of $\bP^1 \times \bP^1$ to $\bP^3$, and 
$\Gamma_1$ and $\Gamma_2$ are twisted cubic curves contained in $Q$.
Then it is straightforward to check that the singularity of $E \setminus Z$ is 
as shown in Figure \ref{fig:singularities 4,4} in the following sense:
a configuration of singularities (e.g.\ $2 D_8^0$) occurs 
if and only if $c$ belongs to the corresponding locus ($Q$)
but not to its subspaces ($\Gamma_1$, $\Gamma_2$, and $\Gamma_1 \cap \Gamma_2$).
We also note that 
$\Gamma_1 \cap \Gamma_2 = \Gamma_1(\setF_4) = \Gamma_2(\setF_4)$.
\begin{figure}
%
\[
\begin{tikzcd}[arrows=-]
& 4 D_4^0 \ar[d]                       & & & \bP^3 \ar[d] & \\
& 2 D_8^0 \ar[dl] \ar[dr]              & & & Q = \bP^1 \times \bP^1 \ar[dl] \ar[dr] & \\
2 E_8^0 \ar[dr] & & 1 D_{16}^0 \ar[dl]   & \Gamma_1 \ar[dr] & & \Gamma_2 \ar[dl] \\
& 1 \EDP_{2,8}^{(0)}                   & & & \Gamma_1 \cap \Gamma_2
\end{tikzcd}
\]
\caption{$\Sing(E) \setminus Z$ in the case of $\EDP_{4,4}^{(r)}$, assuming $\epsilon^{-6} \beta^2 \in \idealp$}
\label{fig:singularities 4,4}
\end{figure}
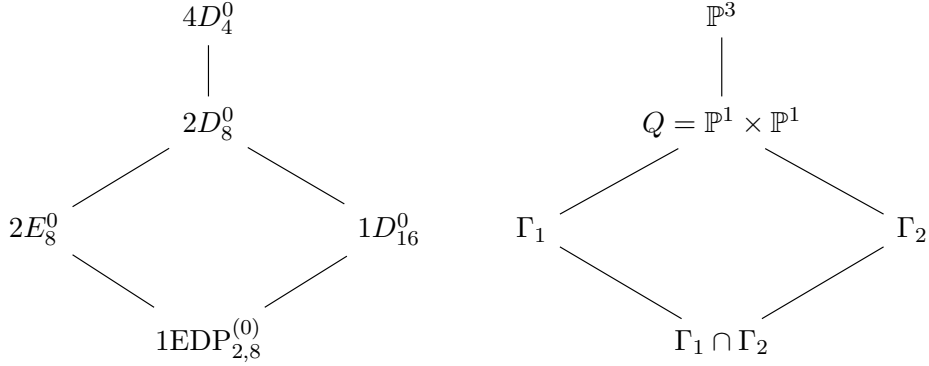

Consider the assertion on the action (Claim \ref{claim:description of X'}(\ref{claim:item:X':action})).
The action of $G = G_{\alpha, \beta}$ on $\cY$ corresponds to the map $\map{\delta}{\cO_{\cY}}{\cO_{\cY}}$ as in Section \ref{subsec:quotient 2}.
Since $\delta(u) = a$ has weight $\geq 4 = 3 + \wt(u)$ 
and $\delta(v) = b$ has weight $\geq 4 = 3 + \wt(v)$ (i.e.\ $a, b \in (u, v)^4$),
it extends to $\map{\delta}{\cO_{\cY'}}{\cO_{\cY'}(-3 E_{\cY})}$,
where $E_{\cY} \subset \cY'$ is the exceptional divisor.
On $\cY' \setminus \pi^{-1}(Z)$, the subsheaf $\cO_{\cY' \setminus \pi^{-1}(Z)}(-3 E_{\cY}) \subset \cO_{\cY' \setminus \pi^{-1}(Z)}$ is equal to $\epsilon^3 \cO_{\cY' \setminus \pi^{-1}(Z)}$,
hence $\delta' := \epsilon^{-3} \delta$ defines a morphism $\cO_{\cY' \setminus \pi^{-1}(Z)} \to \cO_{\cY' \setminus \pi^{-1}(Z)}$
and it corresponds to an action of $G' = G_{\alpha', \beta'}$,
$(\alpha', \beta') = (\epsilon^3 \alpha, \epsilon^{-3} \beta)$.
Since $\delta'(\epsilon^{-1} u) = \epsilon^{-4} a = a'$ and $\delta'(\epsilon^{-1} v) = b'$,
the fixed locus of the action of $G'$ on $\cY'_{\epsilon}$ is $(\overline{a'} = \overline{b'} = 0)$.
This descrpition also shows that the image under the specialization map of $\Sing(X_K)$ is equal to $\Sing(E) \setminus Z$.

Next we consider singularities of $Z$ and $E$ contained in $Z \cap E$, which is contained in $\cX'_{x} \cup \cX'_{y}$.
By symmetry, it suffices to consider $\cX'_{x} = \Spec A_x$.
Define elements of $A_x$ by
\[
y' = \frac{y}{x}, \quad
z' = \frac{z}{x^2}, \quad
z'' = \frac{\epsilon z}{x^{3}}, \quad
q = \frac{\epsilon^2}{x}, \quad
a' = \frac{a}{x^{2}}, \quad
b' = \frac{b}{x^{2}}.
\]
Also let 
\[
s = \frac{z^2}{x^5} = \frac{z'^2}{x} 
= \alpha a' b' z' x  - (b'^2 + a'^2 y' - q^3  (\epsilon^{-6} \beta^2) y').\]
Then the $\cO$-algebra $A_x$ is \'etale over the subalgebra generated by $x,y',z',z'',q$,
subject to
\[
\vectoriii
{s}
{z' \quad z''}
{x \quad \epsilon \quad q}
\in V_2 := \biggset{
\vectoriii{T_0^2}{T_0 T_1 \quad T_0 T_2}{T_1^2 \quad T_1 T_2 \quad T_2^2}
},
\]
where $V_2 \subset \bA^6$ is the cone over the image of the second Veronese map $\bP^2 \to \bP^5$.
From the above description,
it is straightforward to check the following assertions:
\begin{itemize}
\item $\cX_x \cap Z = (\varpi = q = z'' = 0)$,
\item $\cX_x \cap E = (\varpi = x = z' = 0)$,
\item $\cX_x \cap \Sing(Z) \cap E = \cX_x \cap \Sing(E) \cap Z = \cX_x \cap E \cap Z \cap (s = 0)$,
\item $\Sing(Z) \cap E = \Sing(E) \cap Z$ consists of $5$ points $\set{P_1, \dots, P_5}$ on which $(x : y) \in \bP^1(\bF_4)$,
\item each $P_i$ is an RDP of type $A_1$ in both $Z$ and $E$,
\end{itemize}
Define $\cX'' \to \cX'$ to be the blow-up at the ideal $(s, z'', x, z', \epsilon, q)$ over $\cX'_x$, and similar over $\cX'_y$. 
Then $\cX''$ is strictly semistable in the broad sense above each $P_i$.
Each exceptional divisor of $\cX'' \to \cX'$ is $\bP^2$, and
the local equations at the triple points are, for example, $\frac{z'}{s} \frac{z''}{s} s - \epsilon$.
%

By construction, $E$ is a hypersurface of degree $10 = 5 + 2 + 2 + 1$ in $\bP(5,2,2,1)$,
hence we obtain $H^1(\cO_E) = 0$ and $K_E = 0$.

$E \setminus Z$ is the quotient of $E_{\cY} \setminus \pi^{-1}(Z)$ by $G'_k = G_{\overline{\alpha'}, \overline{\beta'}} = G_{0, \overline{\beta'}}$, which is isomorphic to either $\mu_2$ or $\alpha_2$
(respectively if $\overline{\beta'} \neq 0$ or $\overline{\beta'} = 0$).

\subsubsection{Case of $\EDP_{2,8}^{(r)}$} \label{subsubsec:case 2,8}

Next we consider the case where $y$ is $\EDP_{2,8}^{(r)}$.

Let $u,v,x,y,z,a,b,F$ as in the previous case.
%
According to the equation given in Definition \ref{def:equations of EDPs},
we may assume 
\begin{align*}
\overline{a} &\equiv y^4 \pmod{(x, y^5, z)}, \\
\overline{b} &\equiv x   \pmod{(x^2, x y, y^3, z)}. 
\end{align*}
We can  assume moreover (by coordinate changes like $u' = u + \epsilon_1 v$ and $v' = v + \epsilon_2 y$ for $\epsilon_1, \epsilon_2 \in \idealp$) that 
\begin{align*}
a &\equiv y^4 + a_3 y^3 + a_2 y^2 + a_1 y, \\
b &\equiv x + b_1 y \pmod{(x^2, x y, y^5, z)},
\end{align*}
$a_1, a_2, a_3, b_1 \in \idealp$.
We take $\epsilon \in \idealp$ satisfying the following condition (again, after replacing $\cO$ if necessary):
\[
\frac{a_i}{\epsilon^{8-2i}}, 
\frac{b_i}{\epsilon^{6-2i}},
\frac{\beta^2}{\epsilon^{10}}\in \cO,
\quad 
\text{and moreover at least one} \in \cO^*.
\]
Let $\cY' \to \cY$ be the weighted blow-up at the ideal $(u, v, \epsilon)$ with respect to the weight $(3, 1, 1)$.
Then $\cX' := \cY' / G$ is the normalized weighted blow-up at $(x, y, z, \epsilon)$ with respect to the weight $(6, 2, 9, 1)$.

We have $\cX' = \cX'_{\epsilon} \cup \cX'_{x} \cup \cX'_{y}$.
We first consider $\cX'_{\epsilon} = \Spec A_{\epsilon}$, whose special fiber is equal to $E \setminus Z$. 
The $\cO$-algebra $A_{\epsilon}$ is \'etale over the subalgebra generated by 
\[
x' := \frac{x}{\epsilon^6}, \quad
y' := \frac{y}{\epsilon^2}, \quad
z' := \frac{z}{\epsilon^9},
\]
subject to $F' = 0$, 
where \[ F' = z'^2 - \epsilon^5 a' b' z' + a'^2 y' + b'^2 x' - (\epsilon^{-10} \beta^2) x' y' \]
 is naively ``$\epsilon^{-18} F$'',
where $a' := \epsilon^{-8} a$ and $b' := \epsilon^{-6} b$.
Let $a_{i}' = \overline{\epsilon^{-(8-2i)} a_{i}} \in k$ and 
$b_{i}' = \overline{\epsilon^{-(6-2i)} b_{i}} \in k$. 
We have $A_{\epsilon} \otimes k = k[x',y',z'] / (\overline{F'})$,
$\overline{F'} = z'^2 + a'^2 y' + b'^2 x' - \overline{\epsilon^{-10} \beta^2} x' y' $,
$a' = y'^4 + a'_3 y'^3 + a'_2 y'^2 + a'_1 y'^1$,
$b' = x' + b'_1 y'$.

If $\beta^2 \epsilon^{-10} \in \cO^*$ then, as in the case of $\EDP_{4,4}^{(r)}$, 
we observe that $\Sing(E) \setminus Z = 16 A_1$.

Suppose $\epsilon^{-10} \beta^2 \in \idealp$.
Then we have 
\begin{align*}
\overline{a'} &= y'^4 + a'_3 y'^3 + a'_2 y'^2 + a'_1 y', \\
\overline{b'} &= x' + b'_1 y' 
\end{align*}
with $a'_i,b'_i \in k$, not all $0$ by the definition of $\epsilon$.
Then 
\[
\overline{F'} = z'^2 + x'^3 + b_1'^2 x' y'^2 + a_1'^2 y'^3 + a_2'^2 y'^5 + a_3'^2 y'^7 + y'^9. 
\]
Then the singularity at the origin is 
as in Figure \ref{fig:singularities 2,8}.

\begin{figure}
%
\[
\begin{tikzcd}[arrows=-]
D_4^0 \ar[d] &                         & () \ar[d] & \\
D_8^0 \ar[d] \ar[dr] &                 & (a'_1 = 0)  \ar[d] \ar[dr] & \\
D_{12}^0 \ar[d] \ar[dr] & E_8^0 \ar[d] & (a'_1 = a'_2 = 0) \ar[d] \ar[dr] & (a'_1 = b'_1 = 0) \ar[d] \\
D_{16}^0 & \EDP_{2,6}^{(0)}            & (a'_1 = a'_2 = a'_3 = 0) & (a'_1 = a'_2 = b'_1 = 0) 
\end{tikzcd}
\]
\caption{Singularity of $E$ at the origin in the case of $\EDP_{2,8}^{(r)}$, assuming $\epsilon^{-10} \beta^2 \in \idealp$}
\label{fig:singularities 2,8}
\end{figure}
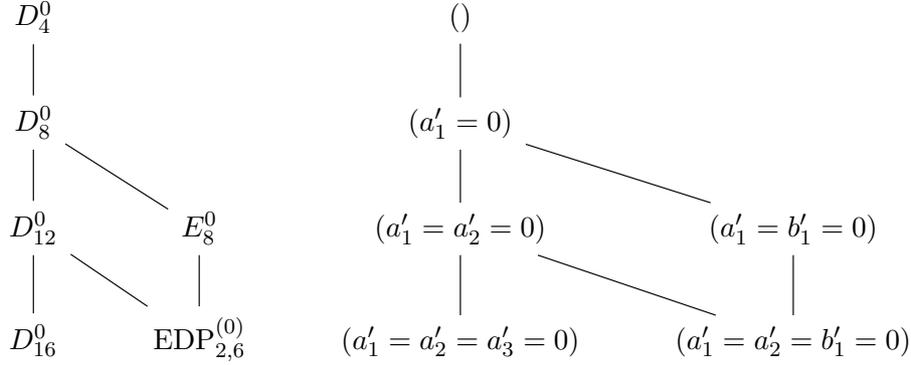
Again we have $\dim k[x',y',z']/I_{\tau} = 32$, where $I_{\tau} = (a'^2, b'^2, z'^2)$ is the Tyurina ideal.
If all singularities are of the same type,
then we conclude that $\Sing(E) \setminus Z$ is one of $4 D_4^0, 2 D_8^0, 2 E_8^0, 1 D_{16}^0$.
In particular, $D_{12}^0$ and $\EDP_{2,6}^{(0)}$ do not appear under this assumption.

The assertion on the extension of the action of $G$ is checked similarly to the case of $\EDP_{4,4}^{(r)}$.
Since $\delta(u) = a$ has weight $\geq 8 = 5 + \wt(u)$
and $\delta(v) = b$ has weight $\geq 6 = 5 + \wt(v)$,
the morphism $\map{\delta}{\cO_{\cY}}{\cO_{\cY}}$ extends to $\map{\delta}{\cO_{\cY'}}{\cO_{\cY'}(-5 E)}$.
We argue similarly (with $3$ replaced with $5$).

Next we consider singularities of $Z$ and $E$ contained in $Z \cap E$, which is contained in $\cX'_{x} \cup \cX'_{y}$.
We check $\cX'_{x} = \Spec A_x$ and omit $\cX'_{y}$ (which does not give any more singularities).
Define elements of $A_x$ by
\begin{gather*}
t_i = \frac{  y^i \epsilon^{6-2i}}{x} \; (i = 0,1,2,3), \quad 
w_i = \frac{z y^i \epsilon^{3-2i}}{x^2} \; (i = 0,1), \quad 
v   = \frac{z y^2}{x^2}, \quad 
z'  = \frac{z}{x}, \\
a'  = \frac{a}{x}, \quad 
b'  = \frac{b}{x}, \quad
s   = \frac{z^2}{x^3} 
= \alpha z' a' b' - (b'^2 + \frac{y a'^2}{x} - \frac{\beta^2}{\epsilon^{10}} t_1 t_0).
\end{gather*}
Then the $\cO$-algebra $A_x$ is \'etale over the subalgebra generated by \[ x,y,t_0,t_1,t_2,t_3,w_0,w_1,v,z', \]
subject to
\begin{gather*}
\rank \begin{pmatrix}
w_0 & \epsilon^2 & t_0 & t_1 & t_2 \\
w_1 & y          & t_1 & t_2 & t_3 \\
\end{pmatrix} \leq 1,
\rank \begin{pmatrix}
w_1 & \epsilon & t_0          & t_1          & t_2 \\
v   & y        & \epsilon t_1 & \epsilon t_2 & \epsilon t_3 \\
\end{pmatrix} \leq 1,
\\
\rank \begin{pmatrix}
w_0 & \epsilon^3 & t_0 \\
z'  & x & \epsilon^3 \\
\end{pmatrix} \leq 1,
\rank \begin{pmatrix}
w_1 & \epsilon y & t_1 \\
z'  & x          & \epsilon^3 \\
\end{pmatrix} \leq 1,
\rank \begin{pmatrix}
v   & y^2 & t_2        & t_3 \\
z'  & x   & \epsilon^2 & y \\
\end{pmatrix} \leq 1,
\end{gather*}
and 
\begin{gather*}
\vectoriii
{s}
{w_0 \quad w_1 \quad z' \quad v}
{t_0 \quad t_1 \quad t_2 \quad \epsilon^3 \quad \epsilon t_2 \quad y\epsilon \quad \epsilon t_3 \quad x \quad y^2 \quad y t_3}
\in V_4 
 \\ 
:= \biggset{
\vectoriii{T_0^2}{T_0 T_1 \quad T_0 T_2 \quad T_0 T_3 \quad T_0 T_4}
{T_1^2 \quad T_1 T_2 \quad T_2^2 \quad T_1 T_3 \quad T_1 T_4 \quad T_2 T_3 \quad T_2 T_4 \quad T_3^2 \quad T_3 T_4 \quad T_4^2}},
\end{gather*}
where $V_4 \subset \bA^{15}$ is the cone over the image of the second Veronese map $\bP^4 \to \bP^{14}$.

%
We have 
$\cX_x \cap Z = (\varpi = t_0 = t_1 = t_2 = w_0 = w_1 = 0)$ and 
$\cX_x \cap E = (\varpi = x = z' = y = v = 0)$.


Let $Q_2, Q_3, Q_4$ be the points on $\cX_x \cap Z \cap E$ where $s$ ($ = 1 + t_3^3 + \dots$) vanish.
Around these points, $t_3$ is a unit.
and hence the following elements can be eliminated:
$x = t_3^{-1} y^3$, $t_1 = t_3^{-1} t_2^2$, $t_0 = t_3^{-2} t_2^3$, $w_0 = t_3^{-1} t_2 w_1$, $z' = t_3^{-1} y v$.
Thus the maximal ideals of the local rings are generated by $v, w_1, y, t_2, s, \varpi$, 
subject to $(s, w_1, v, t_2, \epsilon t_3, y t_3) \in V_2$,
where $V_2$ is (as in Section \ref{subsubsec:case 4,4}) the cone over the image of the Veronese map $\bP^2 \to \bP^5$.
It is straightforward to check that they are RDPs of type $A_1$ on both components  $Z = (w_1 = \varpi = t_2 = 0)$ and $E = (v = y = \varpi = 0)$,
and that the blow-up of $\cX'$ at $(s, v, y, w_1, \epsilon, t_2)$ is strictly semistable in the broad sense above these points.

Let $Q_1$ be the point $(t_i = x = y = z' = v = w_i = 0)$.
Around this point $s$ is a unit, hence the maximal ideal of the local ring is generated by $w_0,w_1,t_3,v,z',y,\varpi$,
subject to $(t_3, v, y s, z' s, w_1, \epsilon s, w_0 s) \in V'$, 
where \[ V' = \set{(T_0^3, T_0^2 T_1, T_0 T_1^2, T_1^3, T_0 T_2, T_1 T_2, T_2^3)} \subset \bA^7 \] is the toric variety 
attached to the monoid $M \subset \bN^3$ generated by the vectors \[ (3,0,0), (2,1,0), (1,2,0), (0,3,0), (1,0,1), (0,1,1), (0,0,3) .\]
(In other words, the elements $(t_3, \dots, w_0 s)$ define a morphism $k[M] \to \cO_{\cX', Q_1}$
from the monoid ring.)
We can check that, around this point, 
\begin{itemize}
\item the component $Z$ is $(\varpi = w_1 = w_0 = 0)$ and $\cO_{Z,Q_1}$ is $\frac{(1,1)}{3}$,
\item the component $E$ is $(\varpi = v = y = z' = 0)$ and $\cO_{E,Q_1}$ is an RDP of type $A_2$.
\end{itemize}
Let $\cX'' \to \cX'$ be the blow-up at $(t_3, v, y, z', w_1, \epsilon, w_0)$.
Then $\cX''$ is strictly semistable in the broad sense.
More precisely, there are $4$ components $\tilde{Z}, \tilde{E}, E_1, E_2$ over this point,
$\tilde{Z}$ and $\tilde{E}$ are the minimal resolutions of $Z$ and $E$ at $Q_1$ respectively,
$E_1 \cong \bP^2$ and $E_2 \cong \bF_2$ are exceptional divisors,
and the local equations at triple points are 
$\frac{v}{t_3} \frac{w_1}{t_3} t_3 - \epsilon s$ at $\tilde{E} \cap \tilde{Z} \cap E_2$
and $\frac{w_0 s}{w_1} \frac{t_3}{w_1} \frac{\epsilon s}{w_1} - \epsilon s$ at $\tilde{E} \cap E_1 \cap E_2$.


The assertion on $H^1(\cO_E)$ and $K_E$ follows also similarly
since $E$ is a hypersurface of degree $18 = 9 + 6 + 2 + 1$ in $\bP(9,6,2,1)$.

\subsection{End of Proof of Theorem \ref{thm:good reduction:bis}} \label{subsec:end of proof}

Let $\cA$ be the N\'eron model of the abelian surface $A$.
By Proposition \ref{prop:supersingular quotient}, 
the singularity of $\cA_k / \set{\pm 1}$ is an elliptic double point of type $\EDP_{4,4}^{(1)}$ or $\EDP_{2,8}^{(1)}$,
according to the supersingular abelian surface $\cA_k$ being superspecial or not.
We apply the normalized weighted blow-up $\cX'' \to \cX' \to \cX = \cA/\set{\pm 1}$ described in Sections \ref{subsec:blow-up}--\ref{subsec:explicit computation}.
Since $A[2]$ acts on itself (by translation) transitively,
all singularity of $\Sing(E) \setminus Z$ are of the same type.
From the classification of possible singularities under this assumption (Claim \ref{claim:description of Sing} (\ref{claim:item:44:same type}) and (\ref{claim:item:28:same type})),
either $\Sing(E) \setminus Z$ consists of RDPs,
or it is $1 \EDP_{2,8}^{(0)}$.
In the latter case, since we know that the affine surface $E \setminus Z$ is the quotient by $G'$, we can apply the blow-up construction once more.
Thus, in each case, we obtain a model $\cX''$ that is strictly semistable in the broad sense outside RDPs on the exceptional component.
In each case, moreover, again by Claim \ref{claim:description of Sing} (\ref{claim:item:44:same type}) and (\ref{claim:item:28:same type}),
the configuration of the RDPs on the special fiber is one of $16 A_1, 4 D_4^0, 2 D_8^0, 2 E_8^0, 1 D_{16}^0$.
We apply Proposition \ref{prop:flat blowup of RDP} (16 times) to $16 A_1$ on the generic fiber and obtain a proper model that is strictly semistable in the broad sense.

Applying Theorem \ref{thm:sufficient condition for good reduction}, we conclude that $X = \Km(A)$ has potential good reduction.

\section{Examples}

We give explicit examples of abelian surface for which $16 A_1$, $4 D_4^0$, $2 D_8^0$ occur as $\Sing(E) \setminus Z$.
Also we show that $\Sing(E) \setminus Z$ may differ between isogenous abelian surfaces.

Let $\cO$ be a discrete valuation ring as before (in particular $\charac K = 0$ and $\charac k = 2$).
The $\idealp$-adic valuation is denoted (additively) by $\map{v}{\cO}{\setQ_{\geq 0} \cup \set{\infty}}$.

Elliptic curves over $K$ having good reduction can be written, after replacing $\cO$, in the form 
$Y^ 2 + c X Y + Y = X^3$ with $c \in \cO$. 
Let us write this curve $E_c$.
The reduction of $E_c$ is defined by the same equation (with coefficients considered modulo $\idealp$),
and is supersingular if and only if $c \in \idealp$.
Using coordinate change $u = \frac{X}{Y}$ and $\overline{u} = -\frac{X}{Y + c X + 1}$,
we obtain the form $u + \overline{u} + c u \overline{u} - (u \overline{u})^2 = 0$,
with origin at $u = \overline{u} = 0$,
and the inversion map given by $u \leftrightarrow \overline{u}$.

Let $c_1, c_2 \in \idealp$ and 
consider $A_{c_1, c_2} := E_{c_1} \times E_{c_2}$,
$u + \overline{u} + c_1 u \overline{u} - (u \overline{u})^2 = 
 v + \overline{v} + c_2 v \overline{v} - (v \overline{v})^2 = 0$.
This abelian surface have good superspecial reduction.
Using the notations of Section \ref{subsubsec:case 4,4}, we have
$a = - c_1 x + x^2$ and $b = - c_2 y + y^2$.
Then $\epsilon$ is chosen so that $v(\epsilon) = \min\set{\frac{1}{2}v(c_1), \frac{1}{2}v(c_2), \frac{1}{3}v(2)}$.
We observe that $\Sing(E) \setminus Z$ is 
\begin{itemize}
\item $16 A_1  $ if $4 \epsilon^{-6} \in \cO^*$, equivalently if both $v(c_1)$ and $v(c_2)$ are $\geq \frac{2}{3}v(2)$,
\item $4  D_4^0$ if $4 \epsilon^{-6} \in \idealp$ and both $\epsilon^{-2} c_1 \in \cO^*$ and $\epsilon^{-2} c_2 \in \cO^*$, equivalently if $v(c_1) = v(c_2) < \frac{2}{3} v(2)$, 
\item $2  D_8^0$ if $4 \epsilon^{-6} \in \idealp$ and either $\epsilon^{-2} c_1 \in \idealp$ or $\epsilon^{-2} c_2 \in \idealp$, equivalently if $v(c_1)$ and $v(c_2)$ are different and at least one is  $< \frac{2}{3} v(2)$, 
\end{itemize}

Next, we will see that isogenous abelian varieties may result in different configuration of singularities on $E \setminus Z$.
Consider another elliptic curve $E_{c_1'}$ that admits an isogeny to $E_{c_1}$ of degree $2$
(if $c_1$ is generic, then there exist exactly $3$ such elliptic curves up to isomorphism).
Using the formula $j(E_c) = c^3 (c^3 - 24)^3 (c^3 - 27)^{-1}$ and the explicit form of the modular polynomial $\Phi_2(X, Y)$, 
we observe that 
\begin{itemize}
\item if $v(c_1) <    \frac{2}{3} v(2)$, then there exists such $c_1'$ with $v(c_1') = \frac{1}{2} v(c_1)$,
\item if $v(c_1) \geq \frac{2}{3} v(2)$, then there exists such $c_1'$ with $v(c_1') = \frac{1}{3} v(2)$.
\end{itemize}

%

We conclude that, while $A_{c_1, c_2}$ and $A_{c_1', c_2}$ are isogenous,
 the resulting configurations of singularities on $\Sing(E) \setminus Z$ differ.

\subsection*{Acknowledgments}
I thank 
Hiroyuki Ito, Kazuhiro Ito, Tetsushi Ito, Teruhisa Koshikawa, Ippei Nagamachi, Hisanori Ohashi, Teppei Takamatsu, and Fuetaro Yobuko 
for helpful comments and discussions.

\begin{bibdiv}
	\begin{biblist}
		\bib{Artin:wild2}{article}{
  author={Artin, M.},
  title={Wildly ramified $Z/2$ actions in dimension two},
  journal={Proc. Amer. Math. Soc.},
  volume={52},
  date={1975},
  pages={60--64},
  issn={0002-9939},
}

\bib{Katsura:Kummer2}{article}{
  author={Katsura, Toshiyuki},
  title={On Kummer surfaces in characteristic $2$},
  conference={ title={Proceedings of the International Symposium on Algebraic Geometry}, address={Kyoto Univ., Kyoto}, date={1977}, },
  book={ publisher={Kinokuniya Book Store, Tokyo}, },
  date={1978},
  pages={525--542},
}

\bib{Lazda--Skorobogatov:reductionofkummer}{article}{
  author={Lazda, Christopher},
  author={Skorobogatov, A. N.},
  title={Reduction of Kummer surfaces modulo 2 in the non-supersingular case},
  year={2022},
  eprint={https://arxiv.org/abs/2205.13831v1},
}

\bib{Liedtke--Matsumoto}{article}{
  author={Liedtke, Christian},
  author={Matsumoto, Yuya},
  title={Good reduction of K3 surfaces},
  journal={Compos. Math.},
  volume={154},
  date={2018},
  number={1},
  pages={1--35},
}

\bib{Matsumoto:SIP}{article}{
  author={Matsumoto, Yuya},
  title={On good reduction of some K3 surfaces related to abelian surfaces},
  journal={Tohoku Math. J. (2)},
  volume={67},
  date={2015},
  number={1},
  pages={83--104},
  issn={0040-8735},
}

\bib{Matsumoto:goodreductionK3}{article}{
  author={Matsumoto, Yuya},
  title={Good reduction criterion for K3 surfaces},
  journal={Math. Z.},
  volume={279},
  date={2015},
  number={1--2},
  pages={241--266},
  issn={0025-5874},
}

\bib{Matsumoto:k3alphap}{article}{
  author={Matsumoto, Yuya},
  title={$\mu _p$- and $\alpha _p$-actions on K3 surfaces in characteristic $p$},
  year={2022},
  journal={J. Algebraic Geom.},
  status={available electronically},
}

\bib{Overkamp:Kummer}{article}{
  author={Overkamp, Otto},
  title={Degeneration of Kummer surfaces},
  journal={Math. Proc. Cambridge Philos. Soc.},
  volume={171},
  date={2021},
  number={1},
  pages={65--97},
  issn={0305-0041},
}

\bib{Saito:logsmooth}{article}{
  author={Saito, Takeshi},
  title={Log smooth extension of a family of curves and semi-stable reduction},
  journal={J. Algebraic Geom.},
  volume={13},
  date={2004},
  number={2},
  pages={287--321},
  issn={1056-3911},
}

\bib{Serre--Tate}{article}{
  author={Serre, Jean-Pierre},
  author={Tate, John},
  title={Good reduction of abelian varieties},
  journal={Ann. of Math. (2)},
  volume={88},
  date={1968},
  pages={492--517},
  issn={0003-486X},
}

\bib{Takamatsu--Yoshikawa:mixed3fold}{article}{
  author={Takamatsu, Teppei},
  author={Yoshikawa, Shou},
  title={Minimal model program for semi-stable threefolds in mixed characteristic},
  year={2023},
  eprint={https://arxiv.org/abs/2012.07324v4},
}

\bib{Tate--Oort:groupschemes}{article}{
  author={Tate, John},
  author={Oort, Frans},
  title={Group schemes of prime order},
  journal={Ann. Sci. \'Ecole Norm. Sup. (4)},
  volume={3},
  date={1970},
  pages={1--21},
  issn={0012-9593},
}

\bib{Wagreich:ellipticsingularities}{article}{
  author={Wagreich, Philip},
  title={Elliptic singularities of surfaces},
  journal={Amer. J. Math.},
  volume={92},
  date={1970},
  pages={419--454},
  issn={0002-9327},
}

	\end{biblist}
\end{bibdiv}

\end{document}